\tikzset{cross/.style={cross out, draw=black, minimum size=2*(#1-\pgflinewidth), inner sep=0pt, outer sep=0pt},
cross/.default={1pt}}
\newtheorem*{remark}{Remark}
\newtheorem*{definition}{Definition}
\newtheorem*{conjecture}{Conjecture}
\newtheorem*{thm}{Theorem}
\newtheorem*{lemmum}{Lemma}
\newtheorem{theorem}{Theorem}[section]
\newtheorem{lemma}[theorem]{Lemma}
\newcommand{\eps}{\varepsilon}
\newcommand{\cS}{\mathcal{S}}
\newcommand{\cT}{\mathcal{T}}
\newcommand{\cV}{\mathcal{V}}
\DeclareMathOperator{\RE}{Re}
\DeclareMathOperator{\mom}{MoM}
\begin{document}
\title{On the moments of the moments of $\zeta(1/2+it)$}

\begin{abstract}
Taking $t$ at random, uniformly from $[0,T]$, we consider the $k$th moment, with respect to $t$, of the random variable corresponding to the $2\beta$th moment of 
$\zeta(1/2+ix)$ over the interval $x\in(t, t+1]$, where $\zeta(s)$ is the Riemann zeta function.   We call these the `moments of moments' of the Riemann zeta function, and present a conjecture for their asymptotics, when $T\to\infty$, for integer $k,\beta$.  This is motivated by comparisons with results for the moments of moments of the characteristic polynomials of random unitary matrices and is shown to follow from a conjecture for the shifted moments of $\zeta(s)$ due to Conrey, Farmer, Keating, Rubinstein, and Snaith~\cite{cfkrs2}.  Specifically, we prove that a function which, the shifted-moment conjecture of~\cite{cfkrs2} implies, is a close approximation to the moments of moments of the zeta function does satisfy the asymptotic formula that we conjecture.  We motivate as well similar conjectures for the moments of moments for other families of primitive $L$-functions.
\end{abstract}

\author{E. C. Bailey}
\address{School of Mathematics, University of Bristol, Bristol, BS8 1UG, United Kingdom}
\email{e.c.bailey@bristol.ac.uk}

\author{J. P. Keating}
\address{Mathematical Institute, University of Oxford, Oxford, OX2 6GG, United Kingdom}
\email{jon.keating@maths.ox.ac.uk}

\maketitle

\section{Introduction}\label{sec:intro}

\subsection{Moments of moments}\label{sec:mom}

Moments of $L$-functions play a central role in analytic number theory.  It is a long-standing conjecture for the Riemann zeta function, $\zeta(s)$, that for the $2\beta$th moment on the critical line $\RE(s)=1/2$, as $T\to\infty$

\begin{equation}\label{zeta_moment}
  M_\beta(T)\coloneqq\frac{1}{T}\int_0^T|\zeta(\tfrac{1}{2}+it)|^{2\beta}dt\sim a_\beta c_\beta \left(\log \frac{T }{2\pi}\right)^{\beta^2},
\end{equation} 
where $a_\beta$ is an arithmetic factor given in terms of an Euler product and $c_\beta$ is discussed below (see also, for example,~\cite{confar00}).  Hardy and Littlewood~\cite{harlit18} proved this asymptotic for $M_1(T)$, and Ingham~\cite{ing26} proved it for $M_2(T)$.  Their formulae, together with heuristic calculations by Conrey and Ghosh~\cite{congho92} and Conrey and Gonek~\cite{congon01} for $M_3(T)$ and $M_4(T)$ respectively, suggest that $c_\beta \cdot (\beta^2)!\in\mathbb{N}$ for $\beta\in\mathbb{N}$.  Beyond $\beta=2$ there are no rigorous asymptotic results for $M_\beta(T)$.  Ramachandra~\cite{ram80} and Heath-Brown~\cite{heabro81} have established the lower bound $M_\beta(T)\gg (\log\frac{T}{2\pi})^{\beta^2}$ for positive, rational $\beta$, and Soundararajan and Radziwi{\l}{\l}~\cite{sourad13} extended the result to all $\beta\geq 1$.  Upper bounds of the correct size are known, conditional on the Riemann hypothesis, due to arguments of Soundararajan~\cite{sou09} and Harper~\cite{har13}. 

There is now a relatively well developed understanding of the conjectural connections between the Riemann zeta function and the characteristic polynomials of random unitary matrices (cf.~\cite{keasna00a, cfkrs2}).  The notation we use for such a characteristic polynomial is
\begin{equation}
  P_N(A,\theta)=\det(I-Ae^{-i\theta}),
\end{equation}
for $A\in U(N)$, the group of $N\times N$ unitary matrices.  By comparing densities of eigenvalues and zeros, one identifies the matrix size $N$ with a height $\log\frac{T}{2\pi}$ up the critical line.  Under this analogy, the random matrix average corresponding to~\eqref{zeta_moment} is 
\begin{equation}
  \mathcal{M}_\beta(N)\coloneqq\int_{U(N)}|P_N(A,\theta)|^{2\beta}dA,
\end{equation}
where the measure $dA$ is the Haar measure on $U(N)$. $\mathcal{M}_\beta(N)$ can be computed when $\RE(\beta)>-1/2$, giving the following result.
\begin{thm}[Keating-Snaith~\cite{keasna00a}]
  For $\RE(\beta)>-1/2$, 
  \[\mathcal{M}_\beta(N)=\prod_{j=1}^N\frac{\Gamma(j)\Gamma(j+2\beta)}{(\Gamma(j+\beta))^2}.\]
\end{thm}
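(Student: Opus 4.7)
My plan is to reduce the integral over $U(N)$ to a multiple integral over the torus via Weyl's integration formula, and then recognise the resulting object as an evaluation of the Selberg--Morris integral on the unit circle.

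First, since $|P_N(A,\theta)|^{2\beta}$ is a class function and Haar measure on $U(N)$ is invariant under $A \mapsto e^{-i\theta}A$, I can set $\theta=0$ without loss of generality, so that
\[
  \mathcal{M}_\beta(N) = \int_{U(N)} |\det(I-A)|^{2\beta}\, dA.
\]
Weyl's integration formula then expresses this as
\[
  \mathcal{M}_\beta(N) = \frac{1}{N!\,(2\pi)^N}\int_{[0,2\pi)^N} \prod_{j=1}^{N}|1-e^{i\theta_j}|^{2\beta}\prod_{1\le j<k\le N}|e^{i\theta_j}-e^{i\theta_k}|^2\, d\theta_1\cdots d\theta_N,
\]
where the eigenvalues of $A$ are $e^{i\theta_1},\ldots,e^{i\theta_N}$. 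The condition $\RE(\beta)>-1/2$ is exactly what is needed for the integrand to be locally integrable near each of the singularities at $\theta_j=0$, which justifies the manipulation and secures convergence.

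Next, I would identify the right-hand side with a specialisation of the Morris integral (equivalently, a circular analogue of Selberg's integral). Concretely, with exponents $a=\beta$, $b=\beta$ and coupling parameter $\gamma=1$, Morris's identity gives
\[
  \frac{1}{N!\,(2\pi)^N}\int_{[0,2\pi)^N}\prod_{j=1}^{N}|1-e^{i\theta_j}|^{2\beta}\prod_{j<k}|e^{i\theta_j}-e^{i\theta_k}|^{2}\,d\boldsymbol\theta = \prod_{j=0}^{N-1}\frac{\Gamma(j+1)\,\Gamma(j+1+2\beta)}{\Gamma(j+1+\beta)^2},
\]
which after reindexing is the desired formula. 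To prove Morris's identity itself I would either deduce it from Selberg's integral by the usual substitution $x_j=\tfrac12(1-\cos\theta_j)$ together with analytic continuation in $\beta$, or alternatively expand $\prod_j|1-e^{i\theta_j}|^{2\beta}$ via its Fourier series and compute the resulting Gram/Toeplitz determinant using the Heine--Szeg\H{o} identity and the Cauchy--Binet formula; either route gives a closed-form product of Gamma functions that collapses to the stated answer.

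The main obstacle is step two: the Selberg--Morris evaluation. Both available strategies have subtleties. The Selberg-integral route needs a careful change of variables and an analytic continuation argument to handle complex $\beta$ with $\RE(\beta)>-1/2$. The Toeplitz determinant route requires identifying the entries of the Toeplitz matrix $(\widehat{f}_\beta(j-k))_{j,k=0}^{N-1}$ (with $f_\beta(\theta)=|1-e^{i\theta}|^{2\beta}$) in terms of ratios of Gamma functions and then evaluating the determinant, for which an induction on $N$ or the Selberg formula itself is still needed. Everything else — the reduction to the torus integral, the choice $\theta=0$, and the bookkeeping that turns $\prod_{j=0}^{N-1}$ into $\prod_{j=1}^{N}$ — is essentially automatic.
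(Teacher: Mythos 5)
Your proposal is correct: the reduction via Weyl's integration formula followed by the Morris (circular Selberg) evaluation with $a=b=\beta$, $\gamma=1$, together with analytic continuation in $\beta$ to the half-plane $\RE(\beta)>-1/2$, yields exactly $\prod_{j=1}^{N}\frac{\Gamma(j)\Gamma(j+2\beta)}{\Gamma(j+\beta)^2}$. Note that the present paper does not prove this statement at all — it quotes it from Keating--Snaith~\cite{keasna00a} — and your argument is essentially the standard derivation given in that source, so there is nothing further to reconcile with the paper.
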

It follows that asymptotically, as $N\to\infty$,
\begin{equation}\label{rmt_moment}
  \mathcal{M}_\beta(N)\sim \frac{\mathcal{G}^2(1+\beta)}{\mathcal{G}(1+2\beta)}N^{\beta^2}\,
\end{equation}
where $\mathcal{G}(s)$ is the Barnes $\mathcal{G}$-function.  One sees that by identifying $N$ with $\log \frac{T}{2\pi}$, $\mathcal{M}_\beta(N)$ grows asymptotically like $M_\beta(T)$, leading to the conjecture~\cite{keasna00a} that 
\begin{equation}\label{coefficient_conj}
  c_\beta=\frac{\mathcal{G}^2(1+\beta)}{\mathcal{G}(1+2\beta)}.
\end{equation}
This matches the known values when $\beta=1$~\cite{harlit18} and $\beta=2$~\cite{ing26}, as well as the values previously conjectured for $\beta=3$~\cite{congho92} and $\beta=4$~\cite{congon01} using heuristic number-theoretic methods.  Recently, these heuristic number-theoretic methods have been extended to calculate all integer moments of the zeta function on its critical line, giving results that agree precisely with~\eqref{coefficient_conj} and with its extension to include lower order terms~\cite{cfkrs2} -- see~\cite{ck1, ck2, ck3, ck4, ck5}.  (The correctness of the formula in the function field setting follows, in the appropriate limit, from equidistribution; cf.~\cite{katsar99}.)

In \eqref{zeta_moment}, the moments are defined with respect to an average over an asymptotically long stretch of the critical line.  Recently there has been interest in the fluctuations in the values of moments defined with respect to averages over random short intervals, e.g.~intervals of constant length.  The moments of these fluctuations are therefore the {\it moments of moments}.   These moments of moments have played a central role in our understanding of the extreme values taken by the zeta function over short intervals~\cite{fyodorov12, fyodorov14, harper1, harper2}, and a precise conjecture for the local maximum is due to Fyodorov and Keating~\cite{fyodorov14}. Recent work of Najnudel~\cite{Najnudel} and Arguin et al.~\cite{abbrs} has led to a verification to leading order of this conjecture, with an almost sharp upper bound, including the predicted subleading order, due to Harper~\cite{harper2}.  At time of writing, the strongest result is due to Arguin et al.~\cite{argbourad20}, whose work settles the upper bound part of the conjecture including the predicted tail. Extreme values and moments over mesoscopic and macroscopic intervals, as well as the transition between the two, are also of interest and have been studied by Arguin et al.~\cite{argouirad}.

Precisely, we define the moments of moments here as follows.  
\begin{definition}
  For $T>0$ and $\RE(\beta)>-1/2$
  \begin{align}\label{def:mom_zeta}
    \mom_{\zeta_T}(k,\beta)\coloneqq \frac{1}{T}\int_0^T\left(\int_{t}^{t+1}|\zeta(\tfrac{1}{2}+ih)|^{2\beta}dh\right)^kdt.
  \end{align}
\end{definition}
Choosing to average over intervals of length $1$ is simply for notational convenience\footnote{See also~\cite{fyodorov12, fyodorov14} where the moments of moments were introduced; there the intervals are instead of length $2\pi$.}; the results stated in section~\ref{sec:results} hold for any interval that is $O(1)$ as $T\rightarrow\infty$. 

The corresponding `moments of moments' for the characteristic polynomials of random unitary matrices may similarly be defined by  
\begin{equation}\label{unitary_mom}
  \mom_{U(N)}(k,\beta)\coloneqq\int_{U(N)}\left(\frac{1}{2\pi}\int_0^{2\pi}|P_N(A,\theta)|^{2\beta}d\theta\right)^kdA.
\end{equation} 
These have recently been computed for integer values of $k,\beta$ in~\cite{baikea19, ak19}.  This suggests a conjecture for $\mom_{\zeta_T}(k,\beta)$, which we state in section~\ref{sec:results}.  We show that this conjecture follows from another conjecture due to Conrey et al.~\cite{cfkrs2} concerning the shifted moments of the zeta function.  Our proof of this follows similar lines to the corresponding proof in~\cite{baikea19}, which is based on an expression derived in~\cite{cfkrs1} for the shifted moments of characteristic polynomials of random matrices.  Moreover, by appealing to results of Assiotis, Bailey, and Keating~\cite{abk19} for the other classical compact matrix groups, in section~\ref{sec:l-function_conj} we give conjectures of moments of moments for other families of $L$-functions.

\subsection{Statement of Result}\label{sec:results}

In order to develop a conjecture for the asymptotics of $\mom_{\zeta_T}(k,\beta)$ for large $T$, we use the large $N$ behaviour of $\mom_{U(N)}(k,\beta)$. Fyodorov, Hiary and Keating~\cite{fyodorov12}, and Fyodorov and Keating~\cite{fyodorov14} conjectured the asymptotic form\footnote{Here and throughout, we write $A(t)\sim B(t)$ as $t\rightarrow\infty$ for $A(t)/B(t)\rightarrow 1$ as $t\rightarrow\infty$.} of $\mom_{U(N)}(k,\beta)$ as matrix size $N\to\infty$, 
\begin{equation}\label{conj:FK}
  \mom_{U(N)}(k,\beta)\sim
  \begin{cases}
    \left(\frac{\mathcal{G}^2(1+\beta)}{\mathcal{G}(1+2\beta)\Gamma(1-\beta^2)}\right)^k
    \Gamma(1-k\beta^2)N^{k\beta^2},
    &\text{if }k<1/\beta^2,
    \\
    c_{k, \beta}N^{k^2\beta^2-k+1},
    &\text{if }k>1/\beta^2,
  \end{cases}
\end{equation}
where again $\mathcal{G}(s)$ is the Barnes $\mathcal{G}$-function and $c_{k,\beta}$ is an unspecified function of $k$ and $\beta$.  

For integer $k,\beta$, Bailey and Keating~\cite{baikea19} determined the asymptotic growth of $\mom_{U(N)}(k,\beta)$: 
\begin{thm}[Bailey-Keating~\cite{baikea19}]
  For $k,\beta\in\mathbb{N}$, 
  \begin{equation}\label{eq:baikea}
    \mom_{U(N)}(k,\beta)=c_{k,\beta}N^{k^2\beta^2-k+1}(1+O\left(\tfrac{1}{N}\right)),
  \end{equation}
  where the leading order coefficient $c_{k,\beta}$ is explicitly given in terms of an integral over a certain simplex.  Furthermore, $\mom_{U(N)}(k,\beta)$ is a polynomial in $N$. 
\end{thm}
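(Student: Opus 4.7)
The plan is to expand the $k$th power in the definition~\eqref{unitary_mom} and exchange the order of integration, obtaining
\[
\mom_{U(N)}(k,\beta) = \frac{1}{(2\pi)^k}\int_{[0,2\pi]^k} J_N(\theta_1,\ldots,\theta_k)\, d\theta_1\cdots d\theta_k,
\]
with $J_N(\theta_1,\ldots,\theta_k)\coloneqq\int_{U(N)}\prod_{j=1}^k|P_N(A,\theta_j)|^{2\beta}\,dA$.  The key input is the exact formula of~\cite{cfkrs1}, specialised to $U(N)$ via the Bump--Gamburd identity, which represents $J_N$ as a sum over size-$k\beta$ subsets of a set of $2k\beta$ shift parameters; each summand is an explicit rational function of the shifts multiplied by an oscillatory factor of the form $e^{iN\cdot(\text{linear form in shifts})}$.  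I would insert this representation, carry out the $\theta$-integrations term by term, and then take the limit in which the shifts within each $\theta_j$-block collapse to $0$.

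Within that framework, translation invariance $\theta_j\mapsto\theta_j+c$ reduces one of the $\theta$-integrations to a factor of $2\pi$ --- this produces the $+1$ in the exponent $k^2\beta^2-k+1$ --- and the remaining $k-1$ integrations are handled by rescaling $\theta_j=\phi_j/N$, under which each summand becomes, up to $O(1/N)$, a rational function of the $\phi_j$.  A power count then identifies the leading contribution of size $N^{(k\beta)^2}\cdot N^{-(k-1)}=N^{k^2\beta^2-k+1}$ as coming from the unique ``balanced'' subset in which exactly $\beta$ of the $2\beta$ shifts in each $\theta_j$-block lie on the positive side selected by the CFKRS sum; every other subset choice produces oscillatory phases $e^{iN(\theta_j-\theta_{j'})}$ that integrate to strictly subleading contributions.

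The coefficient $c_{k,\beta}$ is then the integral over $\phi\in\mathbb{R}^{k-1}$ of the scaling limit of the balanced summand, which by symmetrisation in the $\phi_j$ and a change to barycentric coordinates rewrites as an integral over a bounded simplex.  Polynomiality of $\mom_{U(N)}(k,\beta)$ in $N$ follows from the same evaluation: for integer $k,\beta$, each of the finitely many CFKRS summands contributes a polynomial in $N$ after the $\theta$-integrations, and the apparent singularities in $\theta_j-\theta_{j'}$ that appear in individual terms must cancel between summands (otherwise $J_N$ would acquire logarithmic features that are incompatible with its smoothness as a function of $\theta$).

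The main obstacle is the combinatorial bookkeeping in the CFKRS sum: tracking which of the subset choices across the $k$ blocks contribute at which order of $N$, controlling the cancellations that give both polynomiality and the sharp $O(1/N)$ error, and verifying that the $\mathbb{R}^{k-1}$-integral defining $c_{k,\beta}$ is absolutely convergent and reduces to a bounded simplex integral.  A secondary technical point is justifying the interchange of the coincident-shift limit, the $\theta$-integration, and the large-$N$ asymptotics uniformly enough to obtain the explicit simplex formula stated.
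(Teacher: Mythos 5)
Your overall framework---expand the $k$th power, exchange integration, insert the CFKRS/\cite{cfkrs1} shifted-moment formula, and localise to the regime where the $\theta_j$ coalesce on scale $1/N$---is the right skeleton, and your power count $N^{(k\beta)^2}\cdot N^{-(k-1)}$ correctly identifies the exponent $k^2\beta^2-k+1$. The genuine gap is the claim that the leading order comes from the \emph{unique} balanced configuration ($\beta$ shifts per $\theta_j$-block on each side) and that every other subset choice is killed by the oscillatory phase $e^{iN(\theta_j-\theta_{j'})}$. This is inconsistent with your own localisation: the dominant contribution comes precisely from $|\theta_j-\theta_{j'}|=O(1/N)$, where those phases are $O(1)$ and do not oscillate at all. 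After the rescaling $\delta_j=N(\theta_j-\theta_k)$ the phases survive as bounded factors of the form $e^{i\sum_j(s_j-\beta)\delta_j}$ inside the limiting integrals, so \emph{every} admissible split of the $2\beta$ shifts in each block contributes at the same order $N^{k^2\beta^2-k+1}$. In the actual argument of~\cite{baikea19} (mirrored in this paper's proof of lemma~\ref{correct_size}), the terms that are discarded are those in which some pole $ih_j$ is not represented exactly $2\beta$ times, and these vanish \emph{identically} by a symmetry argument, not by oscillation; the surviving terms are indexed by all $\underline{l}\in\{0,\dots,2\beta\}^{k-1}$, and the leading coefficient is the full sum over $\underline{l}$ with the oscillatory factors retained, cf.~\eqref{eq:gamma_coeff} and~\eqref{pre_incomplete_gamma}, where $e^{2i\sum_j(l_j-\beta)\delta_j}$ appears explicitly in $\Psi_{k,\beta}$. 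Keeping only the zero-phase term would give the correct exponent but an incorrect $c_{k,\beta}$, and would also break the cancellation of the coincident-shift singularities that you invoke for polynomiality, since those cancellations occur between the very summands you propose to drop.

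A secondary weakness: your polynomiality argument ("singularities must cancel, otherwise $J_N$ would have logarithmic features") is a plausibility statement rather than a proof; in~\cite{baikea19} polynomiality is established from the exact evaluation of the decomposed contour integrals (each surviving summand is computed explicitly, e.g.\ by geometric-series expansion of the $(1-e^{z_n-z_m})^{-1}$ factors), not by an a priori smoothness argument. If you repair the main point---carry all block-count configurations through the $1/N$-scale analysis, with the phase factors kept, and only then assemble the simplex integral for $c_{k,\beta}$---your route becomes essentially the contour-decomposition proof of~\cite{baikea19} expressed in the residue (subset-sum) language.
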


Via an alternative representation (involving Toeplitz determinants and a Riemann-Hilbert analysis) Claeys and Krasovsky~\cite{CK} calculated $\mom_{U(N)}(2,\beta)$ for $\beta>-1/4$, and determined an alternative expression for the coefficient $c_{2,\beta}$.  This calculation was recently extended by Fahs~\cite{fahs} to all $k\in\mathbb{N}$ and non-negative, real $\beta$, though without an explicit expression for $c_{k,\beta}$.  For integer values of $k$ and $\beta$, Assiotis and Keating showed how one of the approaches developed in~\cite{baikea19} leads to a connection with lattice point counts and gave yet another expression for $c_{k,\beta}$, as a volume of a certain region involving continuous Gelfand-Tsetlin patterns with constraints.  In a recent paper~\cite{baikea21}, the present authors computed the analogue of \eqref{unitary_mom} for a different logarithmically correlated process - branching random walks - with $k\in\mathbb{N}$ and $\beta\in\mathbb{R}$.  The resulting expression agrees asymptotically with \eqref{eq:baikea}.

By making the natural exchanges in \eqref{eq:baikea}, we therefore expect the following to hold.  
\begin{conjecture}
  Let $k,\beta\in\mathbb{N}$.  Then,
  \begin{equation}\label{conj:main}
    \mom_{\zeta_T}(k,\beta)=\alpha_{k,\beta}c_{k,\beta}\left(\log\tfrac{T}{2\pi}\right)^{k^2\beta^2-k+1}\left(1+O_{k,\beta}\left(\log^{-1} T\right)\right),
  \end{equation}
  where $c_{k,\beta}$ is the same coefficient appearing in \eqref{eq:baikea}, and $\alpha_{k,\beta}$ contains the arithmetic information and takes the form of an Euler product over primes (cf. \eqref{zeta_moment}).  
\end{conjecture}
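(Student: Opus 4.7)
The strategy is to reduce $\mom_{\zeta_T}(k,\beta)$ to an $\mathbf{x}$-integral of shifted $2k\beta$-th moments of $\zeta$, substitute in the CFKRS main term for those shifted moments, and then extract the asymptotics of the resulting $k$-fold shift integral by a contour/residue analysis patterned on the random unitary calculation in~\cite{baikea19}. I would first expand the $k$-th power and exchange the order of integration to obtain
\begin{align*}
\mom_{\zeta_T}(k,\beta) = \int_{(0,1]^k}\left(\frac{1}{T}\int_0^T \prod_{j=1}^k |\zeta(\tfrac12 + i(t+x_j))|^{2\beta}\,dt\right) dx_1\cdots dx_k,
\end{align*}
so that the inner $t$-average is a shifted $(2k\beta)$-th moment of the type controlled by the conjecture of~\cite{cfkrs2}.

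I would then replace the inner integral by its CFKRS ``main term'' $\widetilde{M}_T(\mathbf{x})$, which is the ``close approximation'' alluded to in the introduction. This main term is a contour integral (equivalently, a sum over the $\binom{2k\beta}{k\beta}$ sign swaps of the shifts) combining an absolutely convergent arithmetic Euler factor $A_{2k\beta}(\boldsymbol{\alpha})$, a polar piece built from $\prod_{i,j}\zeta(1+\alpha_i-\alpha_j)$, and an oscillating factor $(T/2\pi)^{-\sum\alpha_i}$, where the shifts $\alpha_j$ are taken near $\pm i x_j$. What one must show is then
\begin{align*}
\int_{(0,1]^k} \widetilde{M}_T(\mathbf{x})\, d\mathbf{x} = \alpha_{k,\beta}\, c_{k,\beta}\, \bigl(\log\tfrac{T}{2\pi}\bigr)^{k^2\beta^2 - k + 1}\bigl(1+O_{k,\beta}(\log^{-1} T)\bigr),
\end{align*}
with $\alpha_{k,\beta} = A_{2k\beta}(\mathbf{0})$ supplying the Euler product and $c_{k,\beta}$ arising from a purely analytic residue integral.

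To evaluate that shift integral I would rescale $x_j = u_j/\log(T/2\pi)$. Because the poles of the $\zeta(1+\alpha_i-\alpha_j)$ factors lie on the diagonal, only shifts of size $O(1/\log T)$ contribute to leading order; Taylor-expanding $A_{2k\beta}$ around $\mathbf{0}$ then pulls out the constant $\alpha_{k,\beta}$ together with a relative error of size $O(\log^{-1}T)$, while the rescaled contour integral in the $u_j$ evaluates to $c_{k,\beta}(\log(T/2\pi))^{k^2\beta^2-k+1}$, with $c_{k,\beta}$ identifiable with the simplex/Gelfand--Tsetlin integral appearing in the unitary case~\cite{baikea19, abk19}. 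The $O(\log^{-1} T)$ error absorbs both the next Taylor term of $A_{2k\beta}$ and the boundary/tail contribution from rescaled variables outside a large ball.

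The main obstacle is the analytic bookkeeping around the diagonal. Individual CFKRS swap terms diverge as shifts coincide, only their symmetrised sum is regular there, and one therefore cannot integrate termwise. As in~\cite{baikea19}, the correct route is to keep the contour-integral form throughout, contract contours past the polar singularities in a prescribed order, and track the cancellations between the residues picked up and the oscillating factor $(T/2\pi)^{-\sum\alpha_i}$. Showing that these cancellations leave exactly the simplex integral from the random-matrix calculation, uniformly in $\mathbf{x}$ on the cube, and justifying the diagonal rescaling so as to produce the asserted error term, is the heart of the argument and is where the reasoning of~\cite{baikea19} has to be adapted to accommodate the arithmetic factor inherent to $\zeta$.
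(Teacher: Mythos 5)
Your plan follows essentially the same route as the paper: expand the $k$th power and swap integrals, invoke the CFKRS shifted-moment conjecture to replace the inner $t$-average by its main term, keep the contour-integral form and adapt the analysis of~\cite{baikea19} (contours split around the shift poles, zeta's simple pole, rescaling the shift differences by $\log\frac{T}{2\pi}$, Taylor-expanding the arithmetic factor at the origin to extract $A_{k\beta}(0,\dots,0)$), exactly as in lemmas~\ref{integral_decomposition}--\ref{positivity_coefficient}. The only cosmetic difference is that the paper rescales the $k-1$ shift differences (whence the exponent $k^2\beta^2-k+1$) and states the analytic constant as $\gamma_{k,\beta}$, proved non-zero by comparison with the random-matrix coefficient $c_{k,\beta}$, rather than asserting the identification outright.
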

A similar conjecture, as well as further motivation for studying moments of moments, can be found in Fyodorov and Keating~\cite{fyodorov14}.  

Furthermore, we expect that the moments of moments, when suitably smoothed, will have the general form $\operatorname{Poly}_{k^2\beta^2-k+1}\left(\log\tfrac{T}{2\pi}\right)+O_{k,\beta}\left(T^{-\delta}\right)$, for some $\delta>0$, where $\operatorname{Poly}_n(x)$ is a polynomial of degree $n$ in the variable $x$.
  
For integer $k$ we can rewrite $\mom_{\zeta_T}(k,\beta)$ by expanding and switching the order of integration,
\begin{equation}\label{eq:mom_expanded}
  \mom_{\zeta_T}(k,\beta)=\frac{1}{T}\int_{0}^{1}\cdots\int_{0}^{1}\int_0^T\prod_{j=1}^k|\zeta(\tfrac{1}{2}+i(t+h_j))|^{2\beta}dt dh_1\cdots dh_k.
\end{equation}

We now apply a conjecture of Conrey et al.~\cite{cfkrs2} concerning shifted moments of the zeta function\footnote{The conjecture in~\cite{cfkrs2} is more general in that it allows for different shifts, as well as integrating against different weight functions.} such as those appearing in the inner integral in \eqref{eq:mom_expanded}.  Specialising this conjecture to our situation (and to our notation) we have the following.

\begin{conjecture}[Conrey et al.~\cite{cfkrs2}]
  Take $k, \beta\in\mathbb{N}$, and $\underline{h}=(h_1,\dots,h_k)$ with $h_j\in\mathbb{R}$.  Then
  \begin{equation}\label{conj:cfkrs}
    \frac{1}{T}\int_{0}^T\prod_{j=1}^k|\zeta(\tfrac{1}{2}+i(t+h_j))|^{2\beta}dt=\frac{1}{T}\int_0^TP_{k,\beta}(\log\tfrac{t}{2\pi},\underline{h})dt+O(T^{-\delta}),
  \end{equation}
  for some $\delta>0$, where 
  \begin{equation}\label{def:p}
    P_{k,\beta}(x;\underline{h})\coloneqq\frac{(-1)^{k\beta}}{(k\beta)!^2}\frac{1}{(2\pi i)^{2k\beta}}\oint\cdots\oint\frac{G(z_1,\dots,z_{2k\beta})\Delta(z_1,\dots,z_{2k\beta})^2}{e^{\frac{x}{2}\sum_{j=1}^{k\beta}z_{k\beta+j}-z_j}\prod_{j=1}^{2k\beta}\prod_{l=1}^{k}(z_j-ih_l)^{2\beta}}dz_1\cdots dz_{2k\beta}
  \end{equation}
  in which $\Delta(x_1,\dots,x_n)=\prod_{i<j}(x_j-x_i)$ is the Vandermonde determinant, and the contours are small circles surrounding the poles.  Additionally,
  \begin{equation}\label{def:g_func}
    G(z_1,\dots,z_{2k\beta})\coloneqq A_{k\beta}(z_1,\dots,z_{2k\beta})\prod_{1\leq i\leq k\beta < j\leq 2k\beta}\zeta(1+z_i-z_j)
  \end{equation}
  which includes the Euler product\footnote{Throughout we write $e(\theta)=\exp(2\pi i\theta)$.}
  \begin{equation}\label{def_A}
    A_{k\beta}(\underline{z})\coloneqq\prod_p \prod_{1\leq l\leq k\beta < m\leq 2k\beta} (1-p^{z_m-z_l-1})\int_0^1\prod_{j=1}^{k\beta}\left(1-\frac{e(\theta)}{p^{\frac{1}{2}+z_j}}\right)^{-1}\left(1-\frac{e(-\theta)}{p^{\frac{1}{2}-z_{k\beta+j}}}\right)^{-1}d\theta. 
  \end{equation}
\end{conjecture}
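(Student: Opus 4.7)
The plan is to derive \eqref{conj:cfkrs}--\eqref{def_A} via the ``recipe'' of Conrey, Farmer, Keating, Rubinstein, and Snaith. First, apply the approximate functional equation to each factor $\zeta(\tfrac12+i(t+h_j))$ and its conjugate, writing $\zeta^\beta$ as a Dirichlet series in the generalized divisor function $d_\beta(n)=\sum_{n_1\cdots n_\beta=n}1$. This expresses the integrand as a sum over $2^{2k\beta}$ ``swap patterns'' specifying which side of the functional equation is used for each of the $2k\beta$ factors. Integrating over $t\in[0,T]$, the oscillation of the functional-equation factors $\chi(\tfrac12+i(t+h_j))^{\pm 1}$ is expected to suppress all patterns except the $\binom{2k\beta}{k\beta}$ ``balanced'' ones (with exactly $k\beta$ factors on each side), and each balanced pattern reduces to evaluating a $2k\beta$-fold Dirichlet series on its diagonal.

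Second, identify the diagonal contribution of each balanced term prime-by-prime. A routine local calculation expresses the diagonal as a product $\prod_{i\le k\beta<j}\zeta(1+z_i-z_j)$ of zeta values times an Euler product $A_{k\beta}(\underline{z})$, whose local factor at $p$ arises as the mean value $\int_0^1\prod_j(1-e(\theta)p^{-1/2-z_j})^{-1}(1-e(-\theta)p^{-1/2+z_{k\beta+j}})^{-1}\,d\theta$ over a local $U(1)$-variable, compensated by $\prod(1-p^{z_m-z_l-1})$ to match the global $\zeta$-factors. Summing the $\binom{2k\beta}{k\beta}$ balanced contributions and interpreting the result via Cauchy's theorem produces the symmetric contour integral \eqref{def:p}: the Vandermonde squared $\Delta^2$ is the Jacobian that symmetrizes the $(z_j-ih_\ell)^{-2\beta}$ pole structure, and the $(-1)^{k\beta}/(k\beta)!^2$ prefactor absorbs the combinatorial symmetrizer. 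The factor $e^{-\tfrac{x}{2}\sum(z_{k\beta+j}-z_j)}$ with $x=\log(t/2\pi)$ originates in the Stirling approximation of the product of $\chi$-factors.

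The hard part, and the reason this remains a conjecture, is rigorously controlling the off-diagonal contributions: evaluating the diagonals is essentially unconditional, but the off-diagonal Dirichlet convolutions have effective length far exceeding $T$, and showing they contribute only lower order requires input equivalent to a strong form of the Conrey--Farmer--Zirnbauer ratios conjecture. No method is presently known that handles $k\beta\ge 3$; even the sixth moment $\int_0^T|\zeta(\tfrac12+it)|^6\,dt$ (the case $k=1,\beta=3$) is open. A realistic intermediate target would therefore be to establish \eqref{conj:cfkrs} conditionally on the ratios conjecture, which packages the necessary off-diagonal cancellation into one cleaner hypothesis, or to prove the analog in the function-field setting, where equidistribution of Frobenius makes the corresponding statement accessible, as in \cite{katsar99}.
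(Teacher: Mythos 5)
There is nothing in the paper to compare your attempt against: the statement you were given is not proved in this paper at all, it is quoted verbatim as a \emph{conjecture} of Conrey, Farmer, Keating, Rubinstein and Snaith~\cite{cfkrs2}, and the paper's own theorem (theorem~\ref{thm:main}) only establishes the asymptotics of the approximating object $\mom_{P_{k,\beta}}(T)$ built from \eqref{def:p}, taking \eqref{conj:cfkrs} as an unproved hypothesis. Your write-up correctly recognises this: what you sketch is the CFKRS ``recipe'' heuristic (approximate functional equation, retention of the $\binom{2k\beta}{k\beta}$ balanced swap terms, diagonal evaluation giving the $\zeta(1+z_i-z_j)$ factors and the Euler product $A_{k\beta}$ of \eqref{def_A}, and repackaging of the sum of residues as the contour integral \eqref{def:p}), together with an accurate assessment that the off-diagonal terms are beyond current technology, so that even $k\beta=3$ is open. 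Two small corrections of emphasis: the $\Delta(z_1,\dots,z_{2k\beta})^2$ in \eqref{def:p} is not a Jacobian of any symmetrisation; it enters through the standard CFKRS lemma that rewrites the sum over the balanced shift-swaps as a multiple contour integral with Vandermonde-squared kernel, and the factor $e^{-\frac{x}{2}\sum_j(z_{k\beta+j}-z_j)}$ indeed comes from the $\chi$-factors via Stirling, as you say. Also note that in the present application all $2k\beta$ shifts are taken in $k$ blocks of size $2\beta$ (the same $h_j$ repeated), which is why the integrand of \eqref{def:p} has poles of order $2\beta$ at each $ih_l$; your sketch treats generic shifts, which is fine but should be specialised. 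In short, your proposal is a faithful account of where the conjecture comes from and of its status, but, like the paper, it does not (and cannot at present) constitute a proof of \eqref{conj:cfkrs}.
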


In light of this, we define 
\begin{equation}\label{mom_p_def}
  \mom_{P_{k,\beta}}(T)\coloneqq  \frac{1}{T}\int_{0}^{1}\cdots\int_{0}^{1}\int_0^TP_{k,\beta}(\log\tfrac{t}{2\pi},\underline{h})dtdh_1\cdots dh_k.
\end{equation}
Under the conjecture of Conrey et al., $\mom_{P_{k,\beta}}(T)$ should approximate $\mom_{\zeta_T}(k,\beta)$ up to a power saving in $T$.  In \eqref{mom_p_def}, the length of the integral over $h_1,\dots, h_k$ has been chosen to be $1$ in order to be explicit.  It is readily apparent from the proof that the techniques extend to any $O(1)$ interval (with respect to $T\rightarrow\infty$). 

Our main result is the following.
\begin{theorem}\label{thm:main}
  For $k,\beta\in\mathbb{N}$, one has that as $T\rightarrow\infty$
  \begin{equation}
    \mom_{P_{k,\beta}}(T)=\alpha_{k,\beta}\gamma_{k,\beta}\left(\log\tfrac{T}{2\pi}\right)^{k^2\beta^2-k+1}(1+O\left(\log^{-1}T\right)),
  \end{equation}
  where the term $\alpha_{k,\beta}=A_{k\beta}(0,\dots,0)$ contains the arithmetic information and $A_{k\beta}$ is as defined in \eqref{def_A}, and $\gamma_{k,\beta}$ denotes the remaining, non-arithmetic, contribution (further details can be found within the proof, see in particular \eqref{eq:gamma_coeff} and \eqref{pre_incomplete_gamma}).
\end{theorem}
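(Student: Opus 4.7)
Throughout set $L:=\log(T/2\pi)$. The plan follows the strategy of~\cite{baikea19} for the random matrix analogue under the correspondence $N\leftrightarrow L$, and proceeds in three stages: (i) remove the $t$-average by exploiting polynomiality of $P_{k,\beta}$; (ii) in the contour integral~\eqref{def:p} rescale $z_j=y_j/L$ and $h_l=u_l/L$ to expose the $L$-dependence in the near-diagonal regime where all $u_l-u_{l'}$ are bounded; (iii) exploit a translation symmetry in $\underline u$ to extract the exponent $k^2\beta^2-k+1$.

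\emph{Stage 1.} Since~\eqref{def:p} is a finite sum of residues, $P_{k,\beta}(x,\underline h)$ is a polynomial in $x$ whose coefficients are continuous on the compact set $[0,1]^k$. A uniform integration by parts then yields
\[
\frac{1}{T}\int_0^T P_{k,\beta}\bigl(\log(t/2\pi),\underline h\bigr)\,dt=P_{k,\beta}(L,\underline h)\bigl(1+O(L^{-1})\bigr),
\]
reducing the theorem to the asymptotic analysis of $\int_{[0,1]^k}P_{k,\beta}(L,\underline h)\,d\underline h$.

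\emph{Stage 2.} Substitute $z_j=y_j/L$ and $h_l=u_l/L$ (so $u_l\in[0,L]$) in~\eqref{def:p}. In the near-diagonal regime the expansion $\zeta(1+w)=w^{-1}+\gamma+O(w)$ applies to each of the $(k\beta)^2$ zeta factors in~\eqref{def:g_func}, and $A_{k\beta}(y/L)=\alpha_{k,\beta}+O(L^{-1})$ by analyticity of~\eqref{def_A} at the origin. Collecting the powers of $L$ from $\Delta(y/L)^2=L^{-2k\beta(2k\beta-1)}\Delta(y)^2$, the pole factors $\prod_{j,l}(z_j-ih_l)^{2\beta}=L^{-4k^2\beta^2}\prod_{j,l}(y_j-iu_l)^{2\beta}$, the measure $dz^{2k\beta}=L^{-2k\beta}dy^{2k\beta}$, and the scale-invariant exponential $e^{-\frac{L}{2}\sum(z_{k\beta+j}-z_j)}=e^{-\frac{1}{2}\sum(y_{k\beta+j}-y_j)}$, one obtains
\[
P_{k,\beta}(L,\underline u/L)=\alpha_{k,\beta}L^{k^2\beta^2}\,\mathcal{I}(\underline u)\bigl(1+O(L^{-1})\bigr),
\]
where $\mathcal{I}(\underline u)$ is the residual $y$-contour integral with each $\zeta$ factor replaced by its pole $1/(y_i-y_j)$ and $A_{k\beta}$ by $\alpha_{k,\beta}$.

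\emph{Stage 3.} The integrand defining $\mathcal{I}(\underline u)$ is invariant under the simultaneous translation $u_l\mapsto u_l+c$, $y_j\mapsto y_j+ic$, with the latter absorbed by a deformation of the closed $y$-contour; hence $\mathcal I$ depends only on the $k-1$ differences $v_l:=u_l-u_1$. Together with the Jacobian $L^{-k}$ from $h_l=u_l/L$ and treating $u_1$ as the translation coordinate,
\[
L^{-k}\int_{[0,L]^k}\mathcal{I}(\underline u)\,d\underline u=L^{-k+1}\left(\int_{\mathbb R^{k-1}}\mathcal I(0,v_2,\ldots,v_k)\,dv_2\cdots dv_k+o(1)\right),
\]
provided the $(k-1)$-dimensional integral converges absolutely. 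Multiplying by $L^{k^2\beta^2}$ from Stage~2 yields the advertised $\alpha_{k,\beta}\gamma_{k,\beta}L^{k^2\beta^2-k+1}$, with $\gamma_{k,\beta}$ equal to the convergent integral above times the combinatorial prefactor of~\eqref{def:p}.

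The principal obstacle is twofold: (a) establishing absolute convergence together with a sufficient decay rate for $\mathcal I(0,v_2,\ldots,v_k)$ as $\max_l|v_l|\to\infty$, and (b) showing that the contribution to $\int_{[0,1]^k}P_{k,\beta}(L,\underline h)\,d\underline h$ from the complementary, non-near-diagonal regime $\max_l|u_l-u_{l'}|\gtrsim L$ (where the $\zeta$-expansion used in Stage~2 fails) is of lower order. Both are handled by taking the $y_j$-contours as small circles around the poles $iv_l$ and bounding the Vandermonde and pole factors by elementary power-counting, paralleling the argument in~\cite{baikea19}. Combined with the $O(L^{-1})$ errors from Stages~1 and~2 and the $o(1)$ boundary error from Stage~3, this gives the stated relative error $(1+O(\log^{-1}T))$.
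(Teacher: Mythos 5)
Your Stages 1--3 are essentially the paper's own route in a different packaging: the paper first decomposes the contour integral over pole assignments (using the vanishing lemma of~\cite{baikea19}), rescales locally via $z_n=\tfrac{2v_n}{x}+i\mu_n$, and then rescales the differences $h_j-h_k$ by $x$ to produce $\Psi_{k,\beta}$ in \eqref{pre_incomplete_gamma}; you rescale globally ($z=y/L$, $h=u/L$) and invoke translation invariance in $\underline{u}$. The power counting agrees and correctly produces $L^{k^2\beta^2-k+1}$, and your obstacle (b) (the far-apart-shift regime) is handled in the paper at a comparable level of detail, namely by arguing that only the leading Laurent term of each $\zeta$ contributes and extending the $\delta$-integrals to $[0,\infty)^{k-1}$.

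The genuine gap is that you never show the leading coefficient is non-zero, and the theorem as stated needs this: with a multiplicative error $(1+O(\log^{-1}T))$, the assertion is vacuous or false if $\gamma_{k,\beta}=0$, and your analysis alone only yields $\mom_{P_{k,\beta}}(T)=\alpha_{k,\beta}\gamma_{k,\beta}L^{k^2\beta^2-k+1}+O(L^{k^2\beta^2-k})$. Your $\gamma_{k,\beta}$ (like the paper's \eqref{eq:gamma_coeff}) is a sum of highly oscillatory contour/real integrals --- note the factors $e^{2i\sum_j(l_j-\beta)\delta_j}$ --- so a priori it could vanish by cancellation; positivity of the integrand is not available. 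The paper devotes lemma~\ref{positivity_coefficient} to exactly this point: $\alpha_{k,\beta}=A_{k\beta}(0,\dots,0)$ is an explicit positive Euler product \eqref{arithmetic_term}, and $\gamma_{k,\beta}\neq0$ is proved by identifying $\Psi_{k,\beta}$, up to a positive constant, with the corresponding object $\Omega_{k,\beta}$ in the random matrix computation, whose aggregate $c_{k,\beta}$ is known to be non-zero from~\cite{baikea19}. You need this comparison (or some substitute) to close the argument. A secondary, fixable issue: your Stage~3 boundary error is only $o(1)$, which gives $(1+o(1))$, not the claimed $(1+O(\log^{-1}T))$; to get the stated error you must quantify the polynomial decay of $\mathcal{I}$ in the difference variables (your power counting gives decay of degree at least $2\beta^2$ per separating direction, which suffices, but it has to be said), and similarly the multiplicative form of your Stage~1 estimate should be replaced by the additive statement that the $t$-average shifts each power $x^{j}$ by a relative $O(jL^{-1})$.
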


Therefore $ \mom_{P_{k,\beta}}(T)$, which, according to the conjecture for the shifted moments of~\cite{cfkrs2} is a close approximation to $\mom_{\zeta_T}(k,\beta)$, does satisfy the same asymptotic formula as we conjecture holds for the moments of moments of the zeta function.


\subsection{Conjectures for other families of $L$-functions}\label{sec:l-function_conj}

Following Katz and Sarnak~\cite{katsar99} (see also~\cite{confar00, keasna00b, cfkrs2}), we can also consider other families of $L$-functions.  These families are split by `symmetry type',  indicating which random matrix ensemble (unitary, symplectic, or orthogonal) should be used as comparison. 

Here we give an example from each family of the relevant `moments of moments', and the corresponding conjecture for their growth.  For each symmetry type, the moments of moments comprise an average through the family and an average over small shifts near a symmetry point. The conjectures for the symplectic and orthogonal families are derived from the relevant random matrix calculations of Assiotis et al.~\cite{abk19}, which we briefly recap here.  

Let $\mathscr{G}(N)\in\{ U(N), \operatorname{Sp}(2N), \operatorname{SO}(2N)\}$, where $\operatorname{Sp}(2N)$ is the group of $2N\times 2N$ unitary symplectic matrices, and $\operatorname{SO}(2N)$ the group of $2N\times 2N$ orthogonal matrices with unit determinant.  A generalisation of the definition given by \eqref{unitary_mom} is the following. 
\begin{equation}
  \mom_{\mathscr{G}(N)}(k,\beta)\coloneqq\int_{\mathscr{G}(N)}\left(\frac{1}{2\pi}\int_0^{2\pi}|P_N(A,\theta)|^{2\beta}d\theta\right)^kdA,
\end{equation}
where the external average is over the relevant Haar measure for the chosen matrix group. 

Assiotis et al.~\cite{abk19} computed integer moments of moments for $\mathscr{G}(N)=\operatorname{Sp}(2N)$ and $\mathscr{G}(N)=\operatorname{SO}(2N)$, and their results are summarised here.

\begin{thm}[Assiotis et al.~\cite{abk19}]\label{MainTheoremSymplectic}
  Let $\mathscr{G}(N)=\operatorname{Sp}(2N)$. Let $k,\beta \in \mathbb{N}$. Then, 
  \begin{align}\label{eq:mom_sympl}
    \mom_{\operatorname{Sp}(2N)}\left(k,\beta\right)=\mathfrak{c}_{k,\beta}N^{k\beta(2k\beta+1)-k}\left(1+O_{k,\beta}\left(\tfrac{1}{N}\right)\right),
  \end{align}
  \sloppy where the leading order term coefficient $\mathfrak{c}_{k,\beta}$ is the volume of a certain convex region\footnote{Differing from the coefficient appearing in~\eqref{eq:baikea}.}. Moreover, $\mom_{\operatorname{Sp}(2N)}\left(k,\beta\right)$ is a polynomial function in $N$.
\end{thm}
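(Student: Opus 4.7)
The plan is to follow the strategy used by Bailey--Keating~\cite{baikea19} in the unitary case, adapted to the symplectic symmetry. The essential input is an explicit shifted-moment formula for $\operatorname{Sp}(2N)$, namely for averages of the form
\begin{equation*}
  \int_{\operatorname{Sp}(2N)}\prod_{j=1}^{2k\beta}\det(I-Ae^{-i\alpha_j})\,dA,
\end{equation*}
which can be obtained either from the symplectic Weyl integration formula via Cauchy-type identities or, equivalently, in a CFKRS-style contour-integral form. This plays the role for $\operatorname{Sp}(2N)$ that the unitary shifted-moment formula from~\cite{cfkrs1} plays in the proof of~\eqref{eq:baikea}.

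I would first expand the $k$th power in the definition of $\mom_{\operatorname{Sp}(2N)}(k,\beta)$ and interchange integrations, obtaining
\begin{equation*}
  \mom_{\operatorname{Sp}(2N)}(k,\beta)=\frac{1}{(2\pi)^k}\int_0^{2\pi}\!\cdots\!\int_0^{2\pi}\int_{\operatorname{Sp}(2N)}\prod_{j=1}^k|P_N(A,\theta_j)|^{2\beta}\,dA\,d\theta_1\cdots d\theta_k.
\end{equation*}
Writing $|P_N(A,\theta)|^{2\beta}=P_N(A,\theta)^\beta\,\overline{P_N(A,\theta)^\beta}$ and using the self-reciprocal property of symplectic characteristic polynomials, the inner Haar average is a symplectic shifted moment of order $2k\beta$. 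Substituting the contour-integral representation turns this inner average into a $2k\beta$-fold contour integral whose integrand is an exponential factor of the form $e^{N\sum_j z_j}$ (carrying the $N$-dependence), a Vandermonde-type piece, and additional factors of the shape $(1-e^{-(z_i+z_j)})^{-1}$ (equivalently $\zeta$-analogues at $1+z_i+z_j$) arising from the symplectic pairing of eigenvalues.

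The next step is to perform the $\theta_j$-integrations: each $\theta_j$ appears through exponentials $e^{\pm i\theta_j\cdot(\text{linear combination of }z)}$, so these integrations produce Cauchy-type rational functions in the $z$ variables with poles encoding the coincidence pattern of the $\theta_j$. This reduces the problem to the asymptotic analysis of a single multi-contour integral of a rational times exponential integrand. Rescaling $z_j\mapsto z_j/N$ transfers the $N$-dependence out of the exponential; careful bookkeeping of the powers of $N$ contributed by the Jacobian, the Vandermonde, the symplectic pairing factors, and the Cauchy poles from the $\theta_j$-integrations yields precisely the exponent $k\beta(2k\beta+1)-k$. The leading coefficient $\mathfrak{c}_{k,\beta}$ is then the resulting rescaled integral, which --- after deforming contours to the real axis and applying residue calculus --- collapses to the volume of a convex region, providing the interpretation as a symplectic analogue of the simplex-volume appearing in~\cite{baikea19}.

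Polynomiality in $N$ follows from the observation that after evaluating all contour integrals by residues, the only source of $N$-dependence is the exponential $e^{N\sum z_j}$, whose Taylor expansion at each residue point contributes only finitely many powers of $N$ determined by the multiplicities of the poles. The main obstacle will be the combinatorial and analytic intricacy of the symplectic integrand: relative to the unitary case, the extra pairing factors enlarge the pole set, the contour deformations and sign-tracking needed to realise the leading coefficient as a genuine (positive) volume rather than a cancelling signed sum are delicate, and identifying the precise convex region --- and verifying that it is of full dimension --- will require an inductive or Gelfand--Tsetlin-style argument in the spirit of the analogous unitary treatment in~\cite{baikea19}.
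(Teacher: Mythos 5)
This statement is not proved in the paper at all: it is quoted verbatim from Assiotis--Bailey--Keating~\cite{abk19} as background for the symplectic $L$-function conjecture, so there is no in-paper argument to compare against. Judged against what \cite{abk19} actually does, your proposal takes a genuinely different route. You adapt the multiple-contour-integral strategy of~\cite{baikea19} (via a CFKRS-style shifted-moment formula for $\operatorname{Sp}(2N)$ from~\cite{cfkrs1}, with the extra pairing factors $(1-e^{-(z_i+z_j)})^{-1}$), whereas \cite{abk19} follows the symmetric-function route: $|P_N(A,\theta)|^{2\beta}$ is expanded in characters via Cauchy-type identities for the symplectic group, the $\theta$-integrations select terms combinatorially, and the $N\to\infty$ asymptotics come from counting lattice points in a convex polytope (continuous Gelfand--Tsetlin-type patterns), in the spirit of~\cite{ak19}. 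That is precisely why $\mathfrak{c}_{k,\beta}$ is described as \emph{the volume of a certain convex region}: the volume arises as the Ehrhart-type leading term of a lattice-point count, and polynomiality of $\mom_{\operatorname{Sp}(2N)}(k,\beta)$ in $N$ is immediate from that combinatorial structure. The contour approach, if carried through, would instead deliver the leading coefficient as a multiple contour/simplex integral (as in~\cite{baikea19}), and your final step --- ``deforming contours to the real axis \dots collapses to the volume of a convex region'' --- is asserted rather than derived; matching the two expressions for the coefficient is a nontrivial identity, not a routine contour deformation.

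Two further points deserve care if you pursue your route. First, the exponent bookkeeping is subtly different from the unitary case: for $U(N)$ the single-point moment of order $2k\beta$ grows like $N^{k^2\beta^2}$ and rotation invariance means only $k-1$ of the $\theta$-integrations are effective, giving $k^2\beta^2-(k-1)$; for $\operatorname{Sp}(2N)$ there is no rotation invariance, the single-point moment grows like $N^{k\beta(2k\beta+1)}$, and all $k$ integrations save a power of $N$, giving $k\beta(2k\beta+1)-k$. Your sketch says ``careful bookkeeping'' but does not confront this structural difference, which is exactly where the symplectic pairing factors (poles at $z_i+z_j=0$, including $i=j$, reflecting the symmetry point at $\theta=0$) enter. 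Second, establishing that the leading coefficient is nonzero (so that the stated order is genuinely attained) is where the volume interpretation earns its keep --- a nonempty full-dimensional convex region manifestly has positive volume --- whereas in the contour formulation nonvanishing requires a separate argument, as in lemma~\ref{positivity_coefficient} of the present paper.
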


\begin{thm}[Assiotis et al.~\cite{abk19}]\label{MainTheoremOrthogonal}
  Let $\mathscr{G}(N)=\operatorname{SO}(2N)$.  Let $k, \beta\in\mathbb{N}$.  Then,  
  \begin{align}
    \mom_{\operatorname{SO}(2N)}(1,1)&=2(N+1)\\
    \intertext{otherwise,}
    \mom_{\operatorname{SO}(2N)}(k,\beta)&=\tilde{\mathfrak{c}}_{k,\beta}N^{k\beta(2k\beta-1)-k}\left(1+O_{k,\beta}\left(\tfrac{1}{N}\right)\right),\label{eq:mom_orthog}
  \end{align}
  where the leading order term coefficient $\tilde{\mathfrak{c}}_{k,\beta}$ is given as a sum of volumes of certain convex regions\footnote{Again, differing from $c_{k,\beta}$ in \eqref{eq:baikea}, as well as $\mathfrak{c}_{k,\beta}$ given in \eqref{eq:mom_sympl}.}. Moreover, $\mom_{\operatorname{SO}(2N)}(k,\beta)$ is a polynomial function in $N$.  
\end{thm}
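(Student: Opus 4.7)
My plan is to adapt the contour-integral / asymptotic-residue strategy developed in \cite{baikea19} for $\mom_{U(N)}(k,\beta)$ to the orthogonal symmetry class. The starting point is to exchange order of integration and write
\[
\mom_{\operatorname{SO}(2N)}(k,\beta) = \frac{1}{(2\pi)^k}\int_{[0,2\pi]^k}\left(\int_{\operatorname{SO}(2N)}\prod_{j=1}^k |P_N(A,\theta_j)|^{2\beta}\,dA\right)d\theta_1\cdots d\theta_k,
\]
and then apply the orthogonal analogue of the Bump--Gamburd / CFKRS joint-moment formula to the inner group integral, producing a $2k\beta$-fold contour integral. Its integrand features $\Delta(z_1,\dots,z_{2k\beta})^2$, an exponential $\exp\bigl(\tfrac{N}{2}\sum_j \epsilon_j z_j\bigr)$ with signs $\epsilon_j$ encoding the shifts, the characteristically orthogonal factors $\prod_{i<j}(z_i+z_j)^{-1}$ (originating from the functional equation $P_N(A,\theta)=e^{2iN\theta}P_N(A,-\theta)$ for $A\in\operatorname{SO}(2N)$, which enforces a $z\leftrightarrow -z$ symmetry), and poles $(z_j - i\theta_\ell)^{-2\beta}$ at the shifts.

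Next I would perform the $\theta_\ell$-integrations (which collapse the shifts to poles in the $z$-variables of appropriate total order) and extract the $N\to\infty$ asymptotics via the substitution $z_j\mapsto z_j/N$: the exponential becomes $O(1)$, and a careful accounting of the powers of $N$ coming from $\Delta^2$, the orthogonal factors $(z_i+z_j)^{-1}$, the pole denominators, and the measure yields the claimed exponent $k\beta(2k\beta-1)-k$. Polynomiality of $\mom_{\operatorname{SO}(2N)}(k,\beta)$ in $N$ follows because the closed-form shifted moment over $\operatorname{SO}(2N)$ is, at fixed shifts, a polynomial of bounded degree in $N$, and this property is preserved by the bounded integration in $\theta_1,\dots,\theta_k$. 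In the rescaled integral the leading coefficient $\tilde{\mathfrak{c}}_{k,\beta}$ appears as a residue, which is converted (via a Laplace-type argument) into an integral over a polytope. The distinctive orthogonal phenomenon is that the $z\leftrightarrow -z$ symmetry forces the $2k\beta$ indices to be grouped into complementary pairs, and each admissible pairing produces its own (nonoverlapping) convex region; $\tilde{\mathfrak{c}}_{k,\beta}$ then emerges as the \emph{sum} of their volumes --- differing both from the single polytope defining $c_{k,\beta}$ in the unitary case and from $\mathfrak{c}_{k,\beta}$ of $\operatorname{Sp}(2N)$, where the paired sign convention has the opposite combinatorial effect.

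The case $(k,\beta)=(1,1)$ must be treated separately because $k\beta(2k\beta-1)-k=0$ there, so the generic formula would predict $N^0$ while the true answer is linear; in effect the generic leading residue vanishes and a subleading term takes over. I would compute this case directly: expanding $P_N(A,\theta)=\sum_{j=0}^{2N}c_j(A)e^{-ij\theta}$, Parseval gives $\tfrac{1}{2\pi}\int_0^{2\pi}|P_N(A,\theta)|^2\,d\theta=\sum_{j=0}^{2N}c_j(A)^2$ (the coefficients being real over $\operatorname{SO}(2N)$), and the palindromic identity $c_j=c_{2N-j}$ combined with the standard Haar averages of the $c_j^2$ evaluates the integral to $2(N+1)$. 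The principal obstacle throughout is the multiple contour integral: justifying the uniform $O(N^{-1})$ error, correctly identifying and enumerating the admissible pairings so that no polytope is missed or double-counted, and cleanly explaining why the $(k,\beta)=(1,1)$ cancellation forces the exceptional direct calculation.
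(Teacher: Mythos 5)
This statement is quoted from Assiotis--Bailey--Keating~\cite{abk19}; the present paper contains no proof of it, so there is no internal argument to compare against. The proof in~\cite{abk19} does not go through CFKRS-type contour integrals at all: it expands $|P_N(A,\theta)|^{2\beta}$ in characters, integrates over $\operatorname{SO}(2N)$ using orthogonality and branching rules, and identifies $\mom_{\operatorname{SO}(2N)}(k,\beta)$ with a count of lattice points (Gelfand--Tsetlin-type patterns with constraints) in regions dilated with $N$; the asymptotics and the polynomiality in $N$ then follow from Ehrhart-type considerations, and $\tilde{\mathfrak{c}}_{k,\beta}$ appears directly as a sum of volumes. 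Your contour-integral route is the orthogonal analogue of the other approach of~\cite{baikea19} and is plausible in outline, but essentially all of its content lies in the power counting you defer: in the orthogonal autocorrelation formula \emph{every} pair $(i,j)$ with $1\le i<j\le 2k\beta$ contributes a factor with a pole at $z_i+z_j=0$, and the formula is a sum over $2^{2k\beta}$ sign vectors, so the residue bookkeeping is substantially heavier than in the unitary case you are extrapolating from; asserting that it "yields the claimed exponent" is not yet an argument.

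Two concrete problems. First, your explanation of the exceptional case $(k,\beta)=(1,1)$ is backwards: there $k\beta(2k\beta-1)-k=0$, while the true answer $2(N+1)$ grows like $N$, i.e.\ \emph{faster} than the generic prediction. A vanishing leading residue with "a subleading term taking over" could only produce slower growth; what actually happens is that the generic leading coefficient diverges (the relevant region is unbounded, so its volume is infinite), which signals that the genuine order is higher and forces the separate treatment. Your Parseval computation of this case is fine once you use that $\Lambda^j\mathbb{C}^{2N}$ is an irreducible $\operatorname{SO}(2N)$-module for $j\neq N$ and splits into exactly two irreducibles at $j=N$, giving $\sum_{j=0}^{2N}\int_{\operatorname{SO}(2N)} c_j(A)^2\,dA=2N+2$. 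Second, polynomiality in $N$ does not follow from your stated reason: for fixed generic $\theta_1,\dots,\theta_k$ the shifted moment over $\operatorname{SO}(2N)$ contains oscillatory terms of the form $e^{\pm iN\theta_j}$ and is not a polynomial in $N$; polynomiality only emerges after the $\theta$-integration, or, as in~\cite{abk19}, from the lattice-point representation.
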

Note that these results differ from the unitary case, where the degree of the polynomial is $k^2\beta^2-k+1$,  see~\eqref{eq:baikea}.   

Returning to the families of $L$-functions, the archetypal example of a unitary $L$-function is the Riemann zeta function, as discussed in section~\ref{sec:mom}. We rewrite the family here for context, 
\begin{equation}\label{ex:unitary_family}
  \{\zeta(\tfrac{1}{2}+it)\; |\; t\geq 0\}.
\end{equation}
To each family there is an associated `height' by which the elements are ordered.  For the family given by \eqref{ex:unitary_family}, one orders by the height up the critical line, $t$. The moments of moments are defined by \eqref{def:mom_zeta}, and their conjectured asymptotic is given by \eqref{conj:main}. 

Quadratic Dirichlet $L$-functions provide an example of a symplectic family: 
\begin{equation}\label{ex:symplectic_family}
  \{L(s, \chi_d)\; |\;\ d\text{ a fundamental discriminant, } \chi_d(n)=\left(\tfrac{d}{n}\right)\},
\end{equation}
where the family is ordered by $|d|$. For $\RE(s)>1$
\begin{equation}
  L(s, \chi_d)\coloneqq\sum_{n=1}^\infty \frac{\chi_d(n)}{n^s},
\end{equation} 
and otherwise $L(s,\chi_d)$ is defined by its analytic continuation.  The moments of moments are defined as 
\begin{equation}\label{mom_symplectic}
  \mom_{L_{\chi_d}}(k,\beta)\coloneqq\frac{1}{D^*}\sideset{}{^*}\sum_{|d|\leq D}\left(\int_{0}^{1}L(\tfrac{1}{2}+it,\chi_d)^{2\beta} dt\right)^k,
\end{equation}
where the sum is only over fundamental discriminants $d$, and $D^*$ denotes the number of terms in the sum. We hence conjecture that for $k, \beta\in \mathbb{N}$, as $D\rightarrow\infty$,
\begin{equation}
  \mom_{L_{\chi_d}}(k,\beta)=\eta_{k,\beta}\mathfrak{c}_{k,\beta}\left(\log D\right)^{k\beta(2k\beta+1)-k}\left(1+O_{k,\beta}\left(\log^{-1}D\right)\right),
\end{equation}
where $\mathfrak{c}_{k,\beta}$ corresponds to the leading order coefficient in \eqref{eq:mom_sympl}, and $\eta_{k,\beta}$ contains the arithmetic information (see for example~\cite{cfkrs2}). 

For an orthogonal example we take the family of twisted elliptic curve $L$-functions, 
\begin{equation}\label{ex:orthogonal_family}
  \{L_E(s, \chi_d)\; |\ d\text{ a fundamental discriminant, } \chi_d(n)=\left(\tfrac{d}{n}\right)\},
\end{equation}
which is ordered by $|d|$ and where the $L$-function is defined by
\begin{equation}
  L_E(s,\chi_d)\coloneqq\sum_{n=1}^\infty \frac{a_n \chi_d(n)}{n^s}
\end{equation}
for $\RE(s)>1$ and by analytic continuation otherwise. The coefficients $a_n$ are the weightings from the $L$-function associated to $E$, normalised so that the critical line is shifted to $\RE(s)=1/2$. The moments of moments are then defined as 
\begin{equation}\label{mom_orthogonal}
  \mom_{L_{E}}(k,\beta)\coloneqq\frac{1}{D^*}\sideset{}{^*}\sum_{|d|\leq D}\left(\int_{0}^{1}L_E(\tfrac{1}{2}+it,\chi_d)^{2\beta} dt\right)^k,
\end{equation}
where again the sum is only over fundamental discriminants $d$, and $D^*$ denotes the number of terms in the sum. We hence conjecture that for $k, \beta\in \mathbb{N}$ and $k\neq 1\neq \beta$, as $D\rightarrow\infty$,
\begin{equation}
  \mom_{L_{E}}(k,\beta)=\xi_{k,\beta}\tilde{\mathfrak{c}}_{k,\beta}\left(\log D\right)^{k\beta(2k\beta-1)-k}\left(1+O_{k,\beta}\left(\log^{-1}D\right)\right),
\end{equation}
where $\tilde{\mathfrak{c}}_{k,\beta}$ corresponds to the leading order coefficient in \eqref{eq:mom_orthog}, and $\xi_{k,\beta}$ contains the arithmetic information (again, see for example~\cite{cfkrs2}). 

We note that for both the symplectic and the orthogonal families defined above one could also prove theorems akin to theorem~\ref{thm:main} using the techniques described in sections~\ref{sec:structure} and~\ref{sec:proofs}.  This is due to the fact that \cite{cfkrs2} also provides conjectures for the shifted moments of the $L$-functions in question.  However, such calculations follow near verbatim those for the Riemann zeta function, so we do not report the details.


\section{Structure of proof of theorem~\ref{thm:main}}\label{sec:structure}

Within this section, we outline the proof of theorem~\ref{thm:main}.  We require a series of lemmas whose proofs are presented in section~\ref{sec:proofs}. 

To contextualise the proof, we note that the general expression for shifted moments of the zeta function conjectured by Conrey et al.~\cite{cfkrs2} (cf. \eqref{conj:cfkrs}) has an exact correspondence to characteristic polynomials in the random matrix case~\cite{cfkrs1} (quoted as theorem~$1.5.2.$ in~\cite{cfkrs2}). A generalised version of the random matrix theorem was used by the present authors when proving \eqref{eq:baikea}.  Hence, the approach outlined in this section follows similar lines to the proof of theorem~$1.2$ in~\cite{baikea19}.  However, in several places we will need different steps to account for the arithmetic factors involved.  

We first recall the definition of $P_{k,\beta}(x;\underline{h})$ from \eqref{def:p},
\begin{equation}
    P_{k,\beta}(x;\underline{h})=\frac{(-1)^{k\beta}}{(k\beta)!^2}\frac{1}{(2\pi i)^{2k\beta}}\oint\cdots\oint\frac{G(z_1,\dots,z_{2k\beta})\Delta(z_1,\dots,z_{2k\beta})^2}{e^{\frac{x}{2}\sum_{j=1}^{k\beta}z_{k\beta+j}-z_j}\prod_{j=1}^{2k\beta}\prod_{l=1}^{k}(z_j-ih_l)^{2\beta}}dz_1\cdots dz_{2k\beta}
\end{equation}
where one integrates over small circles surrounding the poles and
\begin{equation}
  G(z_1,\dots,z_{2k\beta})= A_{k\beta}(z_1,\dots,z_{2k\beta})\prod_{1\leq i\leq k\beta < j\leq 2k\beta}\zeta(1+z_i-z_j)
\end{equation}
which includes the Euler product 
\begin{equation}
  A_{k\beta}(\underline{z})=\prod_p \prod_{1\leq l\leq k\beta < m\leq 2k\beta} (1-p^{z_m-z_l-1})\int_0^1\prod_{j=1}^{k\beta}\left(1-\frac{e(\theta)}{p^{\frac{1}{2}+z_j}}\right)^{-1}\left(1-\frac{e(-\theta)}{p^{\frac{1}{2}-z_{k\beta+j}}}\right)^{-1}d\theta.
\end{equation}

Since it will be useful, we also here record an alternative form for $A_{k\beta}(\cdot)$ also due to Conrey et al.~\cite{cfkrs2} (a special case of corollary 2.6.2 in their paper, found there at (2.6.15)),
\begin{equation}\label{alt_a}
  A_{k\beta}(z_1,\dots,z_{2k\beta})=\prod_p\sum_{m=1}^{k\beta}\prod_{n\neq m}\frac{\prod_{j=1}^{k\beta}\left(1-\frac{1}{p^{1+z_j-z_{k\beta+n}}}\right)}{1-p^{z_{k\beta+n}-z_{k\beta+m}}},
\end{equation}
where additionally they show that each local factor is actually a polynomial in $p^{-1}, p^{-z_j}$, and $p^{z_{k\beta+j}}$ for $j=1,\dots,k\beta$. 

We begin by decomposing the multiple contour integral $P_{k,\beta}(x,\underline{h})$ in to a sum of multiple contour integrals, and apply a variable change so that each contour shifts to small circles surrounding the origin. Eventually (see \eqref{conj:cfkrs}--\eqref{def:p}), we will want to set $x=\log(t/2\pi)$. 

\begin{lemma}\label{integral_decomposition}
  Asymptotically for large $x$, 
  \begin{align}
    P_{k,\beta}(x;\underline{h})&\sim\sum_{l_1,\dots,l_{k-1}=0}^{2\beta}\frac{c_{\underline{l}}(k,\beta)}{(k\beta)!^2(2\pi i)^{2k\beta}}\left(\frac{x}{2}\right)^{|\cS_{k,\beta;\underline{l}}|}A_{k\beta}(i\mu_1,\dots,i\mu_{2k\beta})e^{\frac{ix}{2}\sum_{j=1}^{k\beta}\mu_j-\mu_{k\beta+j}}\nonumber\\
    &\qquad\quad\times\int_{\Gamma_0}\cdots\int_{\Gamma_0}\prod_{\substack{ m\leq k\beta<n\\\mu_n\neq \mu_m}}\zeta\left(1+\tfrac{2(v_m-v_n)}{x}+i(\mu_m-\mu_n)\right)f(\underline{v};\underline{l})\prod_{m=1}^{2k\beta}dv_m,
  \end{align}
  where $c_{\underline{l}}(k,\beta)$ is a product of binomial coefficients determined within the proof and
  \begin{equation}\label{def:f_func}
    f(\underline{v};\underline{l})\coloneqq\frac{e^{\sum_{j=1}^{k\beta}v_j-v_{k\beta+j}}\prod_{\substack{m<n\\\mu_m=\mu_n}}\left(v_n-v_m\right)^2}{\prod_{\substack{m\leq k\beta<n\\\mu_n= \mu_m}}(v_m-v_n)\prod_{m=1}^{2k\beta}v_m^{2\beta}}.
  \end{equation}
  The contours denoted by $\Gamma_0$ are small circles around the origin, and the $\mu_i$ variables are related to the $h_j$ variables - for the exact definition see \eqref{mu_vector}. Further, the following sets are useful to define, and the former appears in the statement of the lemma,
  \begin{align}
    \cS_{k,\beta;\underline{l}}&\coloneqq\{(m,n):1\leq m\leq k\beta<n\leq 2k\beta, \mu_m=\mu_n\}\label{S_set},\\
    \cT_{k,\beta;\underline{l}}&\coloneqq\{(m,n):1\leq m\leq k\beta<n\leq 2k\beta, \mu_m\neq\mu_n\}.\label{T_set}
  \end{align}
\end{lemma}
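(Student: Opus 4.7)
The plan is to treat the contour integral defining $P_{k,\beta}(x;\underline{h})$ by isolating the contribution of each assignment of poles $ih_1,\ldots,ih_k$ to the $2k\beta$ contours, then rescaling so that the large parameter $x$ appears transparently. In each variable $z_j$ the integrand has a pole of order $2\beta$ at every $ih_l$, originating from the factor $\prod_l(z_j-ih_l)^{2\beta}$; all other factors ($A_{k\beta}$, the Vandermonde, the shift-zetas, and the exponential) are analytic in a neighborhood of these points. The first step is to expand each contour as a sum of $k$ small circles, one around each pole, and then exploit the symmetry of the integrand under permutations of $z_1,\ldots,z_{k\beta}$ and of $z_{k\beta+1},\ldots,z_{2k\beta}$ to gather terms of the same pattern. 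Let $a_l$ and $b_l$ denote how many contours in the first and second half respectively enclose $ih_l$; terms with equal $(a_l,b_l)$-profile give identical integrals and are collected with a multinomial coefficient $c_{\underline{l}}(k,\beta)$.

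Within each pattern I would substitute $z_j=i\mu_j+2v_j/x$, where $\mu_j$ is the pole enclosed by the $j$-th contour, moving each integration onto a small circle $\Gamma_0$ about the origin. The scaling $2/x$ is chosen so that, for pairs $(m,n)\in\cS_{k,\beta;\underline{l}}$, the shift-zeta argument becomes $1+2(v_m-v_n)/x$; then $\zeta(1+\varepsilon)\sim 1/\varepsilon$ yields $x/(2(v_m-v_n))$ per such pair, producing the prefactor $(x/2)^{|\cS_{k,\beta;\underline{l}}|}$ and placing $(v_m-v_n)$ into the denominator of $f$. For pairs in $\cT_{k,\beta;\underline{l}}$ the zeta factor is retained at finite $x$. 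The exponential separates as $e^{\frac{ix}{2}\sum(\mu_j-\mu_{k\beta+j})}e^{\sum(v_j-v_{k\beta+j})}$, and $A_{k\beta}$, analytic at the shift, reduces to $A_{k\beta}(i\mu_1,\ldots,i\mu_{2k\beta})$ at leading order.

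The Vandermonde squared splits by whether partners share $\mu$: same-$\mu$ pairs $(i<j)$ contribute $(v_j-v_i)^2$ (after stripping $x^{-2}$), feeding the numerator of $f$; different-$\mu$ pairs contribute $\underline{h}$-dependent constants which pair with the inverses of the far-pole factors $(i(\mu_j-h_l))^{2\beta}$ (for $h_l\neq\mu_j$) and, on balanced patterns, cancel cleanly. The encircled pole factor $(z_j-i\mu_j)^{2\beta}$ in the denominator of the integrand becomes $(2v_j/x)^{2\beta}$, contributing $v_j^{2\beta}$ to the denominator of $f$ and a factor $x^{2\beta}$ per $j$. Counting powers of $x$ from the Vandermonde, the pole inversions, the jacobian $(2/x)^{2k\beta}$, and the $(x/2)^{|\cS_{k,\beta;\underline{l}}|}$ already extracted shows that the net exponent of $x$ equals $|\cS_{k,\beta;\underline{l}}|$ precisely when $a_l+b_l=2\beta$ for every $l$. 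By Cauchy--Schwarz, $\sum_l(a_l+b_l)^2\geq 4k\beta^2$ with equality only on this balanced profile, so unbalanced patterns are strictly subleading. The balanced patterns are parameterized by $(l_1,\ldots,l_{k-1})=(a_1,\ldots,a_{k-1})\in\{0,\ldots,2\beta\}^{k-1}$, with $a_k$ forced by $\sum_l a_l=k\beta$ and $b_l=2\beta-a_l$.

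The main obstacle will be this careful $x$-power bookkeeping together with the companion verification that all Taylor-remainder errors---from expanding $A_{k\beta}$, the $\cT$-pair zeta factors, and the far-pole factors around the shifts---are of strictly smaller order in $1/x$. Identifying $c_{\underline{l}}(k,\beta)$ explicitly as a product of binomial coefficients counting the permutations producing each balanced pattern is then a direct combinatorial computation.
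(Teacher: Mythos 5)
Your overall strategy matches the paper's: decompose each of the $2k\beta$ contours into small circles about the poles $ih_1,\dots,ih_k$, collect identical pole-assignment patterns with a product of binomial coefficients, substitute $z_j=i\mu_j+2v_j/x$, and use the Laurent expansion $\zeta(1+s)\sim 1/s$ on the pairs in $\cS_{k,\beta;\underline{l}}$ to extract $(x/2)^{|\cS_{k,\beta;\underline{l}}|}$ while retaining the zeta factors on $\cT_{k,\beta;\underline{l}}$; the cancellation of the distinct-$\mu$ Vandermonde factors against the far-pole factors on balanced patterns is also as in the paper. The genuine gap is your treatment of the \emph{unbalanced} pole assignments. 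The paper does not argue that these are small: it invokes (a transfer of) Lemma~3.2 of \cite{baikea19}, which says that any summand in which some pole is not represented exactly $2\beta$ times vanishes \emph{identically}, the transfer being legitimate because $\zeta(1+z_m-z_n)$ and $(1-e^{z_n-z_m})^{-1}$ have the same simple-pole structure at $z_m=z_n$. Your substitute argument by power counting does not suffice. Writing $a_l,b_l$ for the multiplicities and $c_l=a_l+b_l$, the net exponent of $x$ of a pattern is $|\cS|+\bigl(4k\beta^2-\sum_l c_l^2\bigr)$ with $|\cS|=\sum_l a_l b_l$, so Cauchy--Schwarz only shows that an unbalanced pattern falls below \emph{its own} nominal power $|\cS|$, equivalently below the maximal balanced exponent $k\beta^2$. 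But the lemma retains balanced terms of \emph{every} exponent $|\cS_{k,\beta;\underline{l}}|=\sum_j l_j(2\beta-l_j)$, some of which are much smaller than $k\beta^2$ (and all of which matter later, since after the $h$-integration each balanced term contributes at the same final order $x^{k^2\beta^2-k+1}$). An unbalanced pattern can sit at or above the exponent of a retained term: for $k=2$, $\beta=1$, the assignment with multiplicities $c_1=3$, $c_2=1$ has exponent $8-10+2=0$, exactly the exponent of the retained balanced terms with $l_1\in\{0,2\}$. So "strictly subleading" is false as a blanket claim, and without the identical-vanishing lemma your asymptotic equivalence (and the subsequent use of it in the proof of the main theorem) does not follow.

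The fix is exactly the ingredient you are missing: prove (or import and justify, as the paper does) that the residue of each unbalanced summand is zero, using the symmetry of the integrand and the fact that the only relevant poles between the two halves of the variables are the simple poles of $\zeta(1+z_m-z_n)$ at $z_m=z_n$, which mirrors the random-matrix integrand of \cite{baikea19}. Once that is in place, your bookkeeping of the powers of $x$ on the balanced patterns, the reduction of $A_{k\beta}$ to $A_{k\beta}(i\mu_1,\dots,i\mu_{2k\beta})$, and the identification of $c_{\underline{l}}(k,\beta)$ proceed as you describe and agree with the paper.
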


The next lemma relies on lemma~\ref{integral_decomposition} and determines the remaining $x\equiv \log(t/2\pi)$ dependence in the integrand of $P_{k,\beta}(x;\underline{h})$ after calculating the contribution from the integration over the $h_1,\dots,h_k$.  

\begin{lemma}\label{correct_size}
  As $T\rightarrow\infty$, 
  \begin{equation}
    \mom_{P_{k,\beta}}(T)\sim \alpha_{k,\beta}\gamma_{k,\beta} \left(\log\tfrac{T}{2\pi}\right)^{k^2\beta^2-k+1},
  \end{equation}
  where $\alpha_{k,\beta}= A_{k\beta}(0,\dots,0)$,
  \begin{align}
    \gamma_{k,\beta}&=\sum_{l_1,\dots,l_{k-1}=0}^{2\beta}\frac{c_{\underline{l}}(k,\beta)}{(k\beta)!^2(2\pi i)^{2k\beta}}\int_{\Gamma_0}\cdots\int_{\Gamma_0}f(\underline{v};\underline{l})\Psi_{k,\beta}(\underline{v};\underline{l})\prod_{m=1}^{2k\beta}dv_m, \label{eq:gamma_coeff}
  \end{align}
  $c_{\underline{l}}(k,\beta)$ and $f(\underline{v};\underline{l})$ are given by \eqref{binom_prod} and \eqref{def:f_func}, and $\Psi_{k,\beta}(\underline{v};\underline{l})$ is a multiple integral defined within the proof, see~\eqref{pre_incomplete_gamma}.
\end{lemma}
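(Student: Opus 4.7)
The plan is to insert the asymptotic of Lemma~\ref{integral_decomposition} into the definition of $\mom_{P_{k,\beta}}(T)$ with $x=\log\tfrac{t}{2\pi}$, and then carry out the $t$- and $\underline{h}$-integrations asymptotically as $T\to\infty$. Because the $\underline{v}$-integrals are taken on fixed small circles $\Gamma_0$ around the origin and the integrand is analytic there uniformly in $(t,\underline{h})$, the first move is to interchange the order of integration and pull the $\underline{v}$-integrals, together with the finite sum over $\underline{l}$, the combinatorial prefactor $c_{\underline{l}}(k,\beta)/((k\beta)!^2(2\pi i)^{2k\beta})$, and the factor $f(\underline{v};\underline{l})$, to the outside. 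What remains inside the $t$- and $\underline{h}$-integrations is the product of the polynomial $(x/2)^{|\cS_{k,\beta;\underline{l}}|}$, the arithmetic factor $A_{k\beta}(i\underline{\mu})$, the oscillatory phase $\exp\!\bigl(\tfrac{ix}{2}\sum_{j=1}^{k\beta}(\mu_j-\mu_{k\beta+j})\bigr)$, and the $\cT_{k,\beta;\underline{l}}$-product of shifted $\zeta$-values.

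The next step is to detach the $\zeta$-product from $\underline{v}$. By the definition of $\cT_{k,\beta;\underline{l}}$ one has $\mu_m\neq\mu_n$ whenever $(m,n)\in\cT_{k,\beta;\underline{l}}$, so each argument $1+\tfrac{2(v_m-v_n)}{x}+i(\mu_m-\mu_n)$ stays uniformly bounded away from the pole of $\zeta$ for $\underline{v}\in\Gamma_0$ and $x=\log\tfrac{t}{2\pi}$ large. A Taylor expansion therefore gives
\[
\zeta\!\left(1+\tfrac{2(v_m-v_n)}{x}+i(\mu_m-\mu_n)\right)=\zeta(1+i(\mu_m-\mu_n))+O(x^{-1}),
\]
and the $O(x^{-1})$ corrections are at least one logarithm below the main contribution, so they can be absorbed into the final error.

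With the $\zeta$-product now depending only on $\underline{\mu}$, the $t$-integral reduces to $\tfrac{1}{T}\int_0^T(\log\tfrac{t}{2\pi})^{|\cS_{k,\beta;\underline{l}}|}\exp\!\bigl(\tfrac{i}{2}\log\tfrac{t}{2\pi}\cdot\alpha(\underline{\mu})\bigr)dt$, with $\alpha(\underline{\mu})=\sum_{j=1}^{k\beta}(\mu_j-\mu_{k\beta+j})$; the substitution $u=\log\tfrac{t}{2\pi}$ converts it into an incomplete gamma function in $L\coloneqq\log\tfrac{T}{2\pi}$, with a closed form involving $(1+i\alpha/2)^{-|\cS|-1}$ and $\gamma(|\cS|+1,L(1+i\alpha/2))$. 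Integrating the resulting expression, multiplied by $A_{k\beta}(i\underline{\mu})\prod_{\cT}\zeta(1+i(\mu_m-\mu_n))$, over $\underline{h}\in[0,1]^k$ produces exactly the object $\Psi_{k,\beta}(\underline{v};\underline{l})$ appearing in \eqref{pre_incomplete_gamma}. To extract $\alpha_{k,\beta}$ cleanly, I would use the translation invariance $A_{k\beta}(z_1+ic,\ldots,z_{2k\beta}+ic)=A_{k\beta}(z_1,\ldots,z_{2k\beta})$ for real $c$, which follows by substituting $\theta\mapsto\theta-c\log p/(2\pi)$ in each $p$-local $\theta$-integral and noting that the rational prefactors depend only on differences $z_m-z_l$; since the leading-order contribution localises where the $\mu_j$ cluster around a common value, this reduces $A_{k\beta}(i\underline{\mu})$ to $A_{k\beta}(\underline{0})=\alpha_{k,\beta}$, which factors out of the $\underline{v}$-integral.

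Putting these ingredients together, the polynomial $(x/2)^{|\cS_{k,\beta;\underline{l}}|}$ contributes $|\cS_{k,\beta;\underline{l}}|$ powers of $L$ directly, while the combined $(t,\underline{h})$-integrations contribute an additional $|\cT_{k,\beta;\underline{l}}|-k+1$ stemming from the $\zeta$-singularities at coincident $\mu$'s; the total $|\cS|+|\cT|-k+1=k^2\beta^2-k+1$ matches the claim, and the remaining $\underline{v}$-integral against $f(\underline{v};\underline{l})\Psi_{k,\beta}(\underline{v};\underline{l})$ assembles $\gamma_{k,\beta}$ as in \eqref{eq:gamma_coeff}. The main obstacle I expect is precisely this degree count at the level of the $\underline{h}$-integrations: the $\zeta(1+i(\mu_m-\mu_n))$ factors are singular as $\mu_m\to\mu_n$ whereas the oscillatory exponential suppresses non-diagonal configurations, and making explicit that these competing effects combine to yield exactly $|\cT|-k+1$ extra powers of $L$, while simultaneously packaging the $\underline{v}$-dependence into the kernel $\Psi_{k,\beta}$, is the technical heart of the proof.
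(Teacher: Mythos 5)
There is a genuine gap, and it sits exactly where you later admit the ``technical heart'' lies. Your second step asserts that, because $\mu_m\neq\mu_n$ for $(m,n)\in\cT_{k,\beta;\underline{l}}$, the arguments $1+\tfrac{2(v_m-v_n)}{x}+i(\mu_m-\mu_n)$ stay uniformly bounded away from the pole of $\zeta$, so that one may replace each factor by $\zeta(1+i(\mu_m-\mu_n))+O(x^{-1})$ and detach the $\zeta$-product from $\underline{v}$. This is false: for $(m,n)\in\cT_{k,\beta;\underline{l}}$ one has $\mu_m-\mu_n=\pm(h_\sigma-h_\tau)$ with $h_\sigma,h_\tau$ \emph{integration variables} in $[0,1]$, so the differences are not bounded below, and in fact the dominant contribution to the $\underline{h}$-integral comes precisely from the region $|h_\sigma-h_\tau|=O(1/x)$, where the regularising term $\tfrac{2(v_m-v_n)}{x}$ is of the same size as $i(h_\sigma-h_\tau)$ and cannot be discarded (nor is the error $O(x^{-1})$ uniform there). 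Detaching the $\zeta$-factors from $\underline{v}$ is moreover incompatible with your own conclusion: the kernel $\Psi_{k,\beta}(\underline{v};\underline{l})$ of \eqref{pre_incomplete_gamma} depends on $\underline{v}$ exactly through the denominators $v_m-v_n\pm i(\delta_\sigma-\delta_\tau)$, which arise only if the coupling between $\underline{v}$ and $\underline{h}$ is kept. The paper's route is the opposite of yours at this point: it keeps the full argument, uses the Laurent expansion $\zeta(1+s)\sim 1/s$ with $s=\tfrac{2(v_m-v_n)}{x}+i(h_\sigma-h_\tau)$ intact, observes (via \eqref{alt_a}) that the integrand depends only on the differences $h_j-h_k$, and rescales $2\delta_j=x(h_j-h_k)$. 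That single change of variables simultaneously produces the factor $(x/2)^{|\cT_{k,\beta;\underline{l}}|-(k-1)}$, reduces $A_{k\beta}(i\underline{\mu})$ to $A_{k\beta}(0,\dots,0)$ by analyticity, and turns the $\underline{h}$-integral into $\Psi_{k,\beta}(\underline{v};\underline{l})$; combined with $(x/2)^{|\cS_{k,\beta;\underline{l}}|}$ and $|\cS_{k,\beta;\underline{l}}|+|\cT_{k,\beta;\underline{l}}|=k^2\beta^2$ this gives the exponent $k^2\beta^2-k+1$, after which the $t$-integration is elementary.

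A secondary, related point: you propose to perform the $t$-integral first, carrying the phase $e^{\frac{ix}{2}\sum_j(\mu_j-\mu_{k\beta+j})}$ into an incomplete gamma function in $L=\log\tfrac{T}{2\pi}$ with parameter depending on $\alpha(\underline{\mu})$, and only then integrate over $\underline{h}$. This ordering is not itself fatal, but since $\alpha(\underline{\mu})$ depends on $\underline{h}$, all of the delicate balance between the oscillation and the near-pole singularities is pushed into an $\underline{h}$-integral of incomplete gamma functions against a product of (by then $v$-independent) $\zeta$-values, and you give no argument that this yields $|\cT_{k,\beta;\underline{l}}|-k+1$ powers of $L$ or the specific kernel $\Psi_{k,\beta}$. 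In the paper the phase is simply absorbed, after rescaling, as $e^{2i\sum_{j=1}^{k-1}(l_j-\beta)\delta_j}$ inside \eqref{pre_incomplete_gamma}, and no oscillatory $t$-integral arises. So while your outline has the right global shape (switch the order of integration, isolate the $\underline{h}$-average, count powers of $\log$), the step that actually produces $\Psi_{k,\beta}$ and the exponent is missing, and the Taylor-expansion step you do supply would have to be undone to complete the argument.
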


The final lemma establishes non-vanishing of the leading order coefficient.

\begin{lemma}\label{positivity_coefficient}
  For $k, \beta\in\mathbb{N}$, the coefficient $\alpha_{k,\beta}\gamma_{k,\beta}$ is non-zero. 
\end{lemma}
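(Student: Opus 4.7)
The plan is to treat the arithmetic factor $\alpha_{k,\beta}$ and the non-arithmetic factor $\gamma_{k,\beta}$ independently.

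For $\alpha_{k,\beta}=A_{k\beta}(0,\ldots,0)$, I would work directly from the Euler-product definition \eqref{def_A}. At the origin each local factor becomes
\[
\bigl(1-\tfrac{1}{p}\bigr)^{k^2\beta^2}\int_0^1\Bigl|1-\frac{e(\theta)}{\sqrt{p}}\Bigr|^{-2k\beta}d\theta,
\]
which is manifestly a strictly positive real number for each prime $p$. The nonvanishing of $\alpha_{k,\beta}$ then reduces to the convergence of the Euler product to a nonzero limit: expanding each local factor in powers of $1/p$, the $O(1/p)$ contribution cancels between the Vandermonde-type prefactor and the $\theta$-integral, leaving each factor of the form $1+O(1/p^2)$. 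Hence $\alpha_{k,\beta}>0$.

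For $\gamma_{k,\beta}$, my plan is to identify it with the leading coefficient that arose in the random-matrix calculation in \cite{baikea19}. The CFKRS shifted-moment conjecture \eqref{conj:cfkrs}--\eqref{def:p} is modelled on the corresponding exact formula for
\[
\int_{U(N)}\prod_{j=1}^{k}|P_N(A,\theta_j)|^{2\beta}\,dA,
\]
and differs from it only through the presence of the arithmetic factor $A_{k\beta}$; all Vandermonde, zeta, and exponential factors are common to both. Running the argument of Lemmas~\ref{integral_decomposition} and \ref{correct_size} for the random-matrix integrand (where $A_{k\beta}(\underline{z})$ is replaced by $1$ and $x=\log(T/2\pi)$ is replaced by $N$) produces precisely the same combinatorial sum over contour integrals as in \eqref{eq:gamma_coeff}, but with no arithmetic prefactor. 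Consequently $\gamma_{k,\beta}=c_{k,\beta}$, the leading coefficient appearing in \eqref{eq:baikea}.

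Finally, $c_{k,\beta}>0$ because $\mom_{U(N)}(k,\beta)$ is the Haar integral of the strictly positive quantity $\bigl(\tfrac{1}{2\pi}\int_0^{2\pi}|P_N(A,\theta)|^{2\beta}\,d\theta\bigr)^k$ for $k,\beta\in\mathbb{N}$, so $\mom_{U(N)}(k,\beta)>0$ for every $N\geq 1$, and \eqref{eq:baikea} forces $c_{k,\beta}>0$. Combining gives $\alpha_{k,\beta}\gamma_{k,\beta}=\alpha_{k,\beta}\,c_{k,\beta}>0$.

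The main obstacle is the identification $\gamma_{k,\beta}=c_{k,\beta}$. This requires checking that when the factor $A_{k\beta}(i\mu_1,\ldots,i\mu_{2k\beta})$ produced in Lemma~\ref{integral_decomposition} is tracked through the subsequent rescalings and $h$-integrations, it factors out to leading order as the constant $A_{k\beta}(0,\ldots,0)$, with the deviation $A_{k\beta}(i\underline{\mu})-A_{k\beta}(0,\ldots,0)$ contributing only to subleading powers of $\log T$. Once this separation is justified — using, for instance, the polynomiality of the local factors in $p^{\pm z_j}$ visible from \eqref{alt_a} — the residual contour integral is precisely the one producing $c_{k,\beta}$ in \cite{baikea19}, and positivity reduces to the tautological positivity of the random-matrix average.
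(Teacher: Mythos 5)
Your proposal is correct and follows essentially the same route as the paper: the non-vanishing of $\gamma_{k,\beta}$ is obtained by identifying it (up to a positive constant) with the random-matrix leading coefficient $c_{k,\beta}$ of \cite{baikea19}, whose positivity follows from $\mom_{U(N)}(k,\beta)$ being a Haar average of a positive quantity, exactly as the paper does by matching $\Psi_{k,\beta}$ with the analogous limit integral $\Omega_{k,\beta}$ from the unitary computation. The only minor difference is cosmetic: for $\alpha_{k,\beta}$ you expand the Euler product \eqref{def_A} at the origin and check the local factors are $1+O(p^{-2})$, whereas the paper simply quotes the closed form \eqref{arithmetic_term} of Conrey et al., both of which make positivity evident.
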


By combining the statements of lemma~\ref{correct_size} and lemma~\ref{positivity_coefficient}, we arrive at the statement of theorem~\ref{thm:main}

\begin{equation}
  \mom_{P_{k,\beta}}(T)=\alpha_{k,\beta}\gamma_{k,\beta}\left(\log\tfrac{T}{2\pi}\right)^{k^2\beta^2-k+1}\left(1+O\left(\log^{-1}\tfrac{T}{2\pi}\right)\right),
\end{equation}
for $k,\beta\in\mathbb{N}$. 


\section{Proofs of lemmas~\ref{integral_decomposition}--\ref{positivity_coefficient}}\label{sec:proofs}

In this section we give the proofs of lemma~\ref{integral_decomposition}, lemma~\ref{correct_size}, and lemma~\ref{positivity_coefficient} from section~\ref{sec:structure}.

\begin{proof}[Proof of lemma~\ref{integral_decomposition}]

We recall the statement of lemma~\ref{integral_decomposition}. 

\begin{lemmum}
   Asymptotically for large $x$, 
  \begin{align}
    P_{k,\beta}(x;\underline{h})&\sim\sum_{l_1,\dots,l_{k-1}=0}^{2\beta}\frac{c_{\underline{l}}(k,\beta)}{(k\beta)!^2(2\pi i)^{2k\beta}}\left(\frac{x}{2}\right)^{|\cS_{k,\beta;\underline{l}}|}A_{k\beta}(i\mu_1,\dots,i\mu_{2k\beta})e^{\frac{ix}{2}\sum_{j=1}^{k\beta}\mu_j-\mu_{k\beta+j}}\nonumber\\
    &\qquad\quad\times\int_{\Gamma_0}\cdots\int_{\Gamma_0}\prod_{\substack{ m\leq k\beta<n\\\mu_n\neq \mu_m}}\zeta\left(1+\tfrac{2(v_m-v_n)}{x}+i(\mu_m-\mu_n)\right)f(\underline{v};\underline{l})\prod_{m=1}^{2k\beta}dv_m,
  \end{align}
  where $c_{\underline{l}}(k,\beta)$ is a product of binomial coefficients determined in the proof and
  \begin{equation}
    f(\underline{v};\underline{l})=\frac{e^{\sum_{j=1}^{k\beta}v_j-v_{k\beta+j}}\prod_{\substack{m<n\\\mu_m=\mu_n}}\left(v_n-v_m\right)^2}{\prod_{\substack{m\leq k\beta<n\\\mu_n= \mu_m}}(v_m-v_n)\prod_{m=1}^{2k\beta}v_m^{2\beta}}.
  \end{equation}
\end{lemmum}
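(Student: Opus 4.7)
The plan is to follow the structure of Bailey--Keating's treatment of the random matrix analogue, adapted to accommodate the arithmetic factor $A_{k\beta}$ and the poles of $\zeta$. Starting from the $2k\beta$-fold contour integral, I would first use meromorphy to decompose each contour as a sum of small contours around the individual poles $ih_1,\dots,ih_k$, expressing $P_{k,\beta}(x;\underline{h})$ as a sum of integrals indexed by an assignment of each variable to a single pole. The symmetry of the integrand under independent permutations of $\{z_1,\dots,z_{k\beta}\}$ and $\{z_{k\beta+1},\dots,z_{2k\beta}\}$ groups these assignments into orbits with multinomial weights; those orbits which contribute to leading order in $x$ (after the rescaling below) are parameterized by $(l_1,\dots,l_{k-1})$ with $l_i\in\{0,\dots,2\beta\}$ encoding the number of variables sent to each pole, yielding the combinatorial coefficient $c_{\underline{l}}(k,\beta)$ as the corresponding product of binomials, and defining the vector $\underline{\mu}$ of pole-locations attached to each variable.

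On each such orbit I would perform the change of variables $z_j = i\mu_j + 2v_j/x$, shrinking the $v_j$-contours to small circles $\Gamma_0$ about the origin with Jacobian $(2/x)^{2k\beta}$, and then analyze each factor of the integrand asymptotically as $x\to\infty$. The local pole $(z_j-i\mu_j)^{-2\beta}$ becomes $(x/2)^{2\beta}v_j^{-2\beta}$; the remaining pole factors $(z_j-ih_\ell)^{-2\beta}$ with $h_\ell\neq\mu_j$ tend to the finite constants $(i(\mu_j-h_\ell))^{-2\beta}$, which, together with the leading value $A_{k\beta}(i\mu_1,\dots,i\mu_{2k\beta})$ of the Euler product, survive in the limit (analyticity of $A_{k\beta}$ at the shifted point being visible from the alternative expression \eqref{alt_a}). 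The exponential factors as $e^{\frac{ix}{2}\sum(\mu_j-\mu_{k\beta+j})}e^{\sum(v_j-v_{k\beta+j})}$; the Vandermonde $\Delta(\underline{z})^2$ splits according to whether each index pair lies in the same or different pole blocks, contributing $(2/x)^2(v_n-v_m)^2$ in the former case and finite limits $(i(\mu_n-\mu_m))^2$ in the latter; and each $\zeta(1+z_i-z_j)$ with $i\leq k\beta<j$ either has $\mu_i=\mu_j$, in which case the simple pole of $\zeta$ at $1$ produces $x/(2(v_i-v_j))$, or $\mu_i\neq\mu_j$, in which case the factor is retained with its $2(v_i-v_j)/x$ shift, matching the $\cT_{k,\beta;\underline{l}}$ product in the statement.

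Combining the Jacobian, the local pole contributions $(x/2)^{2\beta}$ per variable, the same-block Vandermonde contractions $(2/x)^2$, and the $\zeta$-residues $(x/2)$ for pairs in $\cS_{k,\beta;\underline{l}}$, the net power of $x/2$ reduces, after cancellation against the $(v_m-v_n)$ factors collected in $f(\underline{v};\underline{l})$ of \eqref{def:f_func}, to precisely $|\cS_{k,\beta;\underline{l}}|$; the residual integrand is then $f(\underline{v};\underline{l})$ multiplied by the $\zeta$-product over $\cT_{k,\beta;\underline{l}}$, as claimed.

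The main obstacle I anticipate is the careful bookkeeping of the several sources of powers of $x$ and the verification that orbits not of the desired form indeed yield strictly lower-order contributions absorbed by the $\sim$ asymptotic. A secondary technical point is to confirm the analyticity of $A_{k\beta}$ at $(i\mu_1,\dots,i\mu_{2k\beta})$ via the polynomial form of each local factor in \eqref{alt_a}, so that its leading value can be pulled out of the contour integral rather than interacting nontrivially with the $v_j$ variables.
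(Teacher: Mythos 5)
Your proposal follows essentially the same route as the paper's proof: decompose each of the $2k\beta$ contours into small circles about $ih_1,\dots,ih_k$, group the resulting pole assignments using the symmetry under independent permutations of the first and last $k\beta$ variables to obtain the multiplicities $c_{\underline{l}}(k,\beta)$ of \eqref{binom_prod}, substitute $z_j=i\mu_j+2v_j/x$, and extract the powers of $x/2$ from the Jacobian, the local pole factors, the same-block Vandermonde factors and the residue of $\zeta$ at its pole; your bookkeeping (net exponent $|\cS_{k,\beta;\underline{l}}|$, survival of $A_{k\beta}(i\mu_1,\dots,i\mu_{2k\beta})$ via its analyticity from \eqref{alt_a}, the oscillatory exponential, and the retained $\zeta$-product over $\cT_{k,\beta;\underline{l}}$) matches \eqref{var_sub_final}. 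One small point you leave implicit: for the residual integrand to be exactly $f(\underline{v};\underline{l})$ you also need the off-block pole constants $(i(\mu_j-h_\ell))^{-2\beta}$ to cancel precisely against the off-block Vandermonde factors $(i\mu_m-i\mu_n)^2$ (this is the cancellation visible in \eqref{var_sub_final}, using that each pole is encircled exactly $2\beta$ times), and likewise the sign $(-1)^{|\cS_{k,\beta;\underline{l}}|}=(-1)^{k\beta}$ must be seen to cancel the prefactor.

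The one substantive divergence is your treatment of the assignments in which some pole is not encircled exactly $2\beta$ times, which you flag as the ``main obstacle'' and propose to control by showing they are of strictly lower order in $x$. The paper disposes of them exactly: by lemma~3.2 of~\cite{baikea19} such summands vanish identically, and the accompanying remark explains why that lemma transfers here, since $\zeta(1+z_m-z_n)$ has the same simple pole at $z_n=z_m$ as the factor $(1-e^{z_n-z_m})^{-1}$ treated there. Your alternative is plausible for the lemma as stated (for fixed, distinct $h_j$ the same convexity count shows any assignment other than the fully balanced one has exponent at most $k\beta^2-1$), but as written it is asserted rather than proved, and pushing it through the later $h$-integration would require uniformity in $\underline{h}$ that degenerates as shifts coalesce; invoking the exact vanishing, as the paper does, is both shorter and avoids these issues, so you should either import that lemma or carry out the power-counting argument in detail.
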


Recall the definition of $P_{k,\beta}(x,\underline{h})$ from \eqref{def:p},
\begin{equation}\label{def:p_proof}
  P_{k,\beta}(x;\underline{h})=\frac{(-1)^{k\beta}}{(k\beta)!^2}\frac{1}{(2\pi i)^{2k\beta}}\oint\cdots\oint\frac{G(z_1,\dots,z_{2k\beta})\Delta(z_1,\dots,z_{2k\beta})^2}{e^{\frac{x}{2}\sum_{j=1}^{k\beta}z_{k\beta+j}-z_j}\prod_{j=1}^{2k\beta}\prod_{l=1}^{k}(z_j-ih_l)^{2\beta}}dz_1\cdots dz_{2k\beta}
\end{equation}
where the contours are small circles around the poles, and $G(\underline)$ is given by \eqref{def:g_func}.

We start by decomposing the $2k\beta$ contours in \eqref{def:p_proof} so that each contour consists of $k$ small circles encompassing the poles at $ih_1,\dots,ih_k$ with connecting straight lines whose contributions cancel out.  Denote a small circular contour encircling $ih_j$ by $\Gamma_{i h_j}$.  Then,
\begin{align}
  P_{k,\beta}(x,\underline{h})&=\frac{(-1)^{k\beta}}{(k\beta)!^2}\frac{1}{(2\pi i)^{2k\beta}}\sum_{\eps_j\in\{1,\dots,k\}}Q_{k,\beta}(x,\underline{h};\eps_1,\dots,\eps_{2k\beta}),\label{p_decomp}
\end{align}
where 
\begin{equation}\label{eq:qMCI}
  Q_{k,\beta}(x,\underline{h};\eps_1,\dots,\eps_{2k\beta})=\int_{\Gamma_{ih_{\eps_1}}}\cdots\int_{\Gamma_{ih_{\eps_{2k\beta}}}}\frac{G(z_1,\dots,z_{2k\beta})\Delta(z_1,\dots,z_{2k\beta})^2}{e^{\frac{x}{2}\sum_{j=1}^{k\beta}z_{k\beta+j}-z_j}\prod_{j=1}^{2k\beta}\prod_{l=1}^{k}(z_j-ih_l)^{2\beta}}dz_1\cdots dz_{2k\beta}
\end{equation}
is the multiple contour integral $P_{k,\beta}(x,\underline{h})$ with the $2k\beta$ contours each specialised around one of the $k$ poles determined by the vector $\underline{\eps}=(\eps_1,\dots,\eps_{2k\beta})$.

Due to the symmetric nature of the integrand, many of the summands evaluate to zero. Such a fact was established in lemma~$3.2$ in~\cite{baikea19}, and is paraphrased here. 
\begin{lemmum}
  Write $\underline{\eps}=(\eps_1,\dots,\eps_{2k\beta})$ for a choice of contours appearing as a summand in \eqref{p_decomp}.  Then the only choices of $\underline{\eps}$ leading to a non-zero summand are those where each pole is equally represented in $\underline{\eps}$.  That is: if $n_i$ counts the number of occurrences of $i$ in $\underline{\eps}$, for $i\in\{1,\dots,k\}$, then the summand corresponding to the choice $\underline{\eps}$ is identically zero unless $n_1=\cdots=n_{k}=2\beta$.
\end{lemmum}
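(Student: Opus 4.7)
The plan is to mirror the proof of Lemma~3.2 of~\cite{baikea19}, which handles the analogous vanishing statement in the random-matrix setting (where the integrand has no zeta factors). The mechanism there is a local degree/homogeneity count near each of the poles $ih_1,\dots,ih_k$.

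Concretely, I fix $i^*\in\{1,\dots,k\}$, set $S=\eps^{-1}(i^*)$ and $n_{i^*}=|S|$, and translate $w_j = z_j - ih_{i^*}$ for $j\in S$. The multidimensional residue at $\vec{w}_S = 0$ is what must be extracted from $Q_{k,\beta}(x,\underline{h};\underline{\eps})$. Setting aside the zeta and arithmetic factors for the moment, the singular core of the integrand at this point is
\[
\frac{\prod_{\substack{a<b\\a,b\in S}}(w_b - w_a)^2}{\prod_{j\in S} w_j^{2\beta}},
\]
a rational function homogeneous of degree $n_{i^*}(n_{i^*}-1-2\beta)$ in $\{w_j\}_{j\in S}$. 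The remaining factors --- cross-pole Vandermonde entries, cross-pole denominator contributions $(z_j-ih_l)^{2\beta}$ with $l\neq i^*$, and the exponential --- are holomorphic and bounded at $\vec{w}_S=0$, admitting Taylor expansions in $\{w_j\}_{j\in S}$ of non-negative degree.

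Since the multidimensional residue picks off the coefficient of $\prod_{j\in S} w_j^{-1}$, a monomial of total degree $-n_{i^*}$, non-vanishing requires $n_{i^*}(n_{i^*}-1-2\beta)\leq -n_{i^*}$, i.e.\ $n_{i^*}\leq 2\beta$. Applied to every $i^*\in\{1,\dots,k\}$ and combined with $\sum_{i=1}^k n_i = 2k\beta$, the pigeonhole principle then forces $n_i = 2\beta$ for all $i$, which is the claim.

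The main obstacle in transferring the argument of~\cite{baikea19} to the present setting is the zeta factors $\zeta(1+z_a-z_b)$ for $a\leq k\beta<b$, which develop simple poles at $z_a=z_b$; these lie within the cluster when both $a,b\in S$. My plan is to pair, for each such $(a,b)$ with $a,b\in S$, the factor $\zeta(1+z_a-z_b)$ with the matching Vandermonde squared factor $(z_b-z_a)^2$, forming the combined expression
\[
(z_b-z_a)^2\,\zeta(1+z_a-z_b) = -(z_b-z_a) + \gamma(z_b-z_a)^2 + O\!\left((z_b-z_a)^3\right),
\]
which is analytic at $z_a=z_b$. After this regrouping, a refined degree count --- tracking how the remaining monomial structure of the effective numerator can or cannot match $\prod_{j\in S}w_j^{2\beta-1}$ --- recovers the bound $n_{i^*}\leq 2\beta$ and closes the argument. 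The arithmetic factor $A_{k\beta}$, being holomorphic and non-vanishing at $\underline{z}=(ih_{\eps_j})_j$, requires no special treatment and contributes no additional singular behaviour.
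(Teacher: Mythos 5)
Your degree count does not close, and the place it fails is exactly the place you flag and then wave past. Once you pair each cluster-internal cross factor $\zeta(1+z_a-z_b)$ (with $a\le k\beta<b$, both indices in the cluster $S$) with its Vandermonde factor $(z_b-z_a)^2$, that pair contributes minimal degree $1$, not $2$. Writing $n=|S|=a+b$ with $a$ cluster indices from $\{1,\dots,k\beta\}$ and $b$ from $\{k\beta+1,\dots,2k\beta\}$, the numerator's minimal homogeneity degree is $2\binom{a}{2}+2\binom{b}{2}+ab$, so the residue (the coefficient of $\prod_{j\in S}w_j^{-1}$, total degree $-n$, against the pole $\prod_{j\in S}w_j^{-2\beta}$) is forced to vanish only when $a^2+b^2+ab>2\beta(a+b)$. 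Since $a^2+b^2+ab\ge\tfrac34 n^2$, this yields only $n\le \tfrac{8\beta}{3}$, which is strictly weaker than $n\le 2\beta$ as soon as $\beta\ge 2$. Concretely, for $k=2$, $\beta=2$ the unbalanced assignment with cluster sizes $5$ and $3$, split $(a,b)=(2,3)$ at one pole and $(2,1)$ at the other, satisfies every such degree constraint ($19\le 20$ and $7\le 12$), so no homogeneity count of this type -- per cluster or jointly over all clusters -- can rule it out. The vanishing asserted by the lemma in that range is a genuine cancellation of a specific Taylor coefficient (the coefficient of $\prod_{j\in S}w_j^{2\beta-1}$ in an analytic numerator whose minimal degree is already low enough), and your ``refined degree count'' is not supplied and cannot exist as a pure degree argument; some additional structural input is required.

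For comparison, the paper does not reprove the statement at all: it invokes Lemma~3.2 of~\cite{baikea19}, whose integrand differs only in having $(1-e^{z_n-z_m})^{-1}$ where the present integrand has $\zeta(1+z_m-z_n)$, and observes that both factors have the same local structure (a simple pole at $z_n=z_m$), so the earlier proof transfers verbatim. Note that this transfer argument is sound precisely because the earlier proof is not the naive homogeneity count you propose -- the same obstruction at $\beta\ge2$ arises in the random-matrix case, since $(1-e^{z_n-z_m})^{-1}$ also only contributes a simple pole inside a cluster. So either cite and adapt the actual argument of~\cite{baikea19}, or identify the finer mechanism (beyond total degree) that kills the coefficient of $\prod_{j\in S}w_j^{2\beta-1}$ for overloaded clusters; your first, simpler count (which ignores the cluster-internal zeta poles and gives $n(n-1-2\beta)\le -n$) is not a valid lower bound on the degree of the true integrand.
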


\begin{remark}
  The above lemma was formulated in~\cite{baikea19} for a slightly different integral, namely
  \begin{equation}\label{eq:oldMCI}
    \int_{\Gamma_{ih_{\eps_1}}}\cdots\int_{\Gamma_{ih{\eps_{2k\beta}}}}\frac{e^{-N(z_{k\beta+1}+\cdots+z_{2k\beta})}\Delta(z_1,\dots,z_{2k\beta})^2dz_1\cdots dz_{2k\beta}}{\prod_{m\leq k\beta<n}\left(1-e^{z_n-z_m}\right)\prod_{m=1}^{2k\beta}\prod_{n=1}^{k}(z_m-ih_n)^{2\beta}}.
  \end{equation}
  However, the structure of \eqref{eq:oldMCI} only differs trivially to that of \eqref{eq:qMCI}.  The key difference is between the terms of the form $(1-\exp({z_n-z_m}))^{-1}$ in \eqref{eq:oldMCI} versus $\zeta(1+z_m-z_n)$ in \eqref{eq:qMCI}.  However, they share the same analytic structure since both have a simple pole at $z_n=z_m$. Therefore the proof presented in~\cite{baikea19} holds as well for~\eqref{eq:qMCI}. 
\end{remark}

Hence \eqref{p_decomp} becomes
\begin{equation}
  P_{k,\beta}(x,\underline{h})=\frac{(-1)^{k\beta}}{(k\beta)!^2}\frac{1}{(2\pi i)^{2k\beta}}\sum_{l_1=0}^{2\beta}\cdots\sum_{l_{k-1}=0}^{2\beta}c_{\underline{l}}(k,\beta)Q_{k,\beta}(x,\underline{h};\underline{l}),\label{p_decomp_specialised}
\end{equation}
where $\underline{l}=(l_1,\dots,l_{k-1})$ and $Q_{k,\beta}(x,\underline{h};\underline{l})$ is the integral $Q_{k,\beta}(x,\underline{h};\underline{\eps})$ with contours given by 
\[\underline{\eps}=(\overbrace{1,\dots,1}^{l_1},\overbrace{2,\dots,2}^{l_2},\dots,\overbrace{k-1,\dots,k-1}^{l_{k-1}},\overbrace{k,\dots,k}^{2\beta},\overbrace{k-1,\dots,k-1}^{2\beta-l_{k-1}},\dots,\overbrace{1,\dots,1}^{2\beta-l_1}),\]
and the coefficient $c_{\underline{l}}(k,\beta)$ is a product of binomial coefficients:
\begin{align}
  c_{\underline{l}}(k,\beta)&=\binom{k\beta}{l_1}\binom{k\beta-l_1}{l_2}\binom{k\beta-(l_1+l_2)}{l_3}\cdots\binom{k\beta-\sum_{m=1}^{k-2}l_m}{l_{k-1}}\nonumber\\
  &\quad\times\binom{k\beta}{2\beta-l_1}\binom{(k-2)\beta+l_1}{2\beta-l_2}\cdots\binom{k\beta-\sum_{m=1}^{k-2}(2\beta-l_m)}{2\beta-l_{k-1}}.\label{binom_prod}
\end{align} 

We now apply the following change of variables so to shift all the contours to be small circles around the origin
\[z_n=\frac{2v_n}{x}+i\mu_n,\]
where
\begin{equation}\label{mu_vector}
  \mu_n=
  \begin{cases}
    h_1,&\text{if }n\in\{1,\dots,l_1\}\cup\{2(k-1)\beta+1+l_1,\dots,2k\beta\}\\
    h_2,&\text{if }n\in\{l_1+1,\dots,l_1+l_2\}\cup\{2(k-2)\beta+1+l_1+l_2,\dots,2(k-1)\beta+l_1\}\\
    \vdots&\quad\vdots\\
    h_{k-1},&\text{if }n\in\{\sum_{m=1}^{k-2}l_m+1,\dots,\sum_{m=1}^{k-1}l_m\}\cup\{2\beta+1+\sum_{m=1}^{k-1}l_m,\dots,4\beta+\sum_{m=1}^{k-2}l_m\}\\
    h_k,&\text{if }n\in\{\sum_{m=1}^{k-1}l_m+1,\dots,\sum_{m=1}^{k-1}l_m+2\beta\}.\end{cases}
\end{equation}
Note that $\mu_n$ is dependent on the choice of $\underline{l}$ (i.e. the exact ordering of the contours), but this is suppressed from the notation for simplicity.

Using the fact that the $\zeta$-function has a simple pole at $1$ with residue $1$, the resulting integrand of $Q_{k,\beta}(x,\underline{h};\underline{l})$ as $x\rightarrow\infty$ is
\begin{align}
  \big(1+O\big(\tfrac{1}{x}&\big)\big)A_{k\beta}\big(\tfrac{2v_1}{x}+i\mu_1,\dots,\tfrac{2v_{2k\beta}}{x}+i\mu_{2k\beta}\big)e^{\frac{ix}{2}\sum_{j=1}^{k\beta}\mu_j-\mu_{k\beta+j}}e^{\sum_{j=1}^{k\beta}v_j-v_{k\beta+j}}\nonumber\\
  &\qquad\times\prod_{\substack{m<n\\\mu_m\neq\mu_n}}(i\mu_n-i\mu_{n})^2\prod_{\substack{1\leq m\leq k\beta<n\leq 2k\beta\\\mu_n\neq \mu_m}}\zeta\big(1+\tfrac{2(v_m-v_n)}{x}+i(\mu_m-\mu_n)\big)\nonumber\\
  &\qquad\times\frac{\prod_{\substack{m<n\\\mu_m=\mu_n}}\left(\tfrac{2(v_n-v_m)}{x}\right)^2}{\prod_{\substack{1\leq m\leq k\beta<n\leq 2k\beta\\\mu_n= \mu_m}}\left(\tfrac{2(v_m-v_n)}{x}\right)\prod_{m=1}^{2k\beta}\prod_{n=1}^{k}\left(\tfrac{2v_m}{x}-i(h_n-\mu_m)\right)^{2\beta}}\prod_{m=1}^{2k\beta}\tfrac{2dv_m}{x}\nonumber\\
  &=\left(1+O\left(\tfrac{1}{x}\right)\right)\left(\tfrac{x}{2}\right)^{-2k\beta}A_{k\beta}(i\mu_1,\dots,i\mu_{2k\beta})e^{\frac{ix}{2}\sum_{j=1}^{k\beta}\mu_j-\mu_{k\beta+j}}e^{\sum_{j=1}^{k\beta}v_j-v_{k\beta+j}}\nonumber\\
  &\qquad\times\prod_{\substack{m<n\\\mu_m\neq\mu_n}}(i\mu_m-i\mu_n)^2\prod_{\substack{m\leq k\beta<n\\\mu_n\neq \mu_m}}\zeta\big(1+\tfrac{2(v_m-v_n)}{x}+i(\mu_m-\mu_n)\big)\nonumber\\
  &\qquad\times\frac{\prod_{\substack{m<n\\\mu_m=\mu_n}}\left(\tfrac{2(v_n-v_m)}{x}\right)^2\prod_{m=1}^{2k\beta}\left(\tfrac{2v_m}{x}\right)^{-2\beta}}{\prod_{\substack{m\leq k\beta<n\\\mu_n= \mu_m}}\left(\tfrac{2(v_m-v_n)}{x}\right)\prod_{\substack{m<n\\\mu_m\neq\mu_n}}(i\mu_m-i\mu_n)^2}\prod_{m=1}^{2k\beta}dv_m\\
  &=\left(1+O\left(\tfrac{1}{x}\right)\right)\left(\tfrac{x}{2}\right)^{4k\beta^2-2k\beta}A_{k\beta}(i\mu_1,\dots,i\mu_{2k\beta})e^{\frac{ix}{2}\sum_{j=1}^{k\beta}\mu_j-\mu_{k\beta+j}}e^{\sum_{j=1}^{k\beta}v_j-v_{k\beta+j}}\nonumber\\
  &\qquad\times\prod_{\substack{ m\leq k\beta<n\\\mu_n\neq \mu_m}}\zeta\big(1+\tfrac{2(v_m-v_n)}{x}+i(\mu_m-\mu_n)\big)\frac{\prod_{\substack{m<n\\\mu_m=\mu_n}}\left(\tfrac{2(v_n-v_m)}{x}\right)^2}{\prod_{\substack{m\leq k\beta<n\\\mu_n= \mu_m}}\tfrac{2(v_m-v_n)}{x}}\prod_{m=1}^{2k\beta}\tfrac{dv_m}{v_m^{2\beta}}.\label{var_sub_final}
\end{align}

The power of $x$ coming from the terms that originated from the Vandermonde (i.e. the numerator of the fraction in \eqref{var_sub_final}) is calculated to be $2k\beta(2\beta-1)$. We now extract the $x$ dependence remaining in the denominator of the integrand~\eqref{var_sub_final}.  First, define the following sets,
\begin{align}
  \cS_{k,\beta;\underline{l}}&\coloneqq\{(m,n):1\leq m\leq k\beta<n\leq 2k\beta, \mu_m=\mu_n\}\\
  \cT_{k,\beta;\underline{l}}&\coloneqq\{(m,n):1\leq m\leq k\beta<n\leq 2k\beta, \mu_m\neq\mu_n\}.
\end{align}
Thus, $|\cS_{k,\beta;\underline{l}}|+|\cT_{k,\beta;\underline{l}}|=k^2\beta^2$, and the pairs $(m,n)\in \cS_{k,\beta;\underline{l}}$ are precisely those involved in the denominator of \eqref{var_sub_final}. As it will be useful, we also briefly record that 
\[(-1)^{|\cS_{k,\beta;\underline{l}}|}=(-1)^{\sum_{j=1}^kl_j(2\beta-l_j)}=(-1)^{k\beta}.\]
Hence the integrand of $Q_{k,\beta}(x,\underline{h};\underline{l})$ is
\begin{align}
  &\left(1+O\left(\tfrac{1}{x}\right)\right)\left(\tfrac{x}{2}\right)^{|\cS_{k,\beta;\underline{l}}|}(-1)^{k\beta}A_{k\beta}(i\mu_1,\dots,i\mu_{2k\beta})e^{\frac{ix}{2}\sum_{j=1}^{k\beta}\mu_j-\mu_{k\beta+j}}e^{\sum_{j=1}^{k\beta}v_j-v_{k\beta+j}}\nonumber\\
  &\qquad\times\prod_{(m,n)\in \cT_{k,\beta;\underline{l}}}\zeta\left(1+\tfrac{2(v_m-v_n)}{x}+i(\mu_m-\mu_n)\right)\frac{\prod_{\substack{m<n\\\mu_m=\mu_n}}\left(v_n-v_m\right)^2}{\prod_{\substack{m\leq k\beta<n\\\mu_n= \mu_m}}(v_m-v_n)}\prod_{m=1}^{2k\beta}\tfrac{dv_m}{v_m^{2\beta}}.
\end{align}
Thus, as $x\rightarrow\infty$,
\begin{align}
  P_{k,\beta}(x,\underline{h})&\sim\sum_{l_1,\dots,l_{k-1}=0}^{2\beta}\frac{c_{\underline{l}}(k,\beta)}{(k\beta)!^2(2\pi i)^{2k\beta}}\left(\tfrac{x}{2}\right)^{|\cS_{k,\beta;\underline{l}}|}A_{k\beta}(i\mu_1,\dots,i\mu_{2k\beta})e^{\frac{ix}{2}\sum_{j=1}^{k\beta}\mu_j-\mu_{k\beta+j}}\nonumber\\
  &\quad\times\int_{\Gamma_0}\cdots\int_{\Gamma_0}\prod_{(m,n)\in \cT_{k,\beta;\underline{l}}}\zeta\left(1+\tfrac{2(v_m-v_n)}{x}+i(\mu_m-\mu_n)\right)f(\underline{v};\underline{l})\prod_{m=1}^{2k\beta}dv_m,
\end{align}
where the terms in the integrand with no $h$ dependence have been collectively denoted by $f(\underline{v};\underline{l})$, so
\begin{equation}
  f(\underline{v};\underline{l})\coloneqq\frac{e^{\sum_{j=1}^{k\beta}v_j-v_{k\beta+j}}\prod_{\substack{m<n\\\mu_m=\mu_n}}\left(v_n-v_m\right)^2}{\prod_{\substack{m\leq k\beta<n\\\mu_n= \mu_m}}(v_m-v_n)\prod_{m=1}^{2k\beta}v_m^{2\beta}},\label{none_h_terms}
\end{equation}
which concludes the proof. 

\end{proof}


\begin{proof}[Proof of lemma~\ref{correct_size}]

  We recall the statement of lemma~\ref{correct_size}. 

  \begin{lemmum}
    \[\mom_{P_{k,\beta}}(T)\sim \alpha_{k,\beta}\gamma_{k,\beta} \left(\log\tfrac{T}{2\pi}\right)^{k^2\beta^2-k+1},\]
    where $\alpha_{k,\beta}\coloneqq A_{k\beta}(0,\dots,0)$,
    \begin{align*}
      \gamma_{k,\beta}&\coloneqq\sum_{l_1,\dots,l_{k-1}=0}^{2\beta}\frac{c_{\underline{l}}(k,\beta)}{(k\beta)!^2(2\pi i)^{2k\beta}}\int_{\Gamma_0}\cdots\int_{\Gamma_0}f(\underline{v};\underline{l})\Psi_{k,\beta}(\underline{v};\underline{l})\prod_{m=1}^{2k\beta}dv_m, 
    \end{align*}
    $c_{\underline{l}}(k,\beta)$ and $f(\underline{v};\underline{l})$ are given by \eqref{binom_prod} and \eqref{def:f_func} and $\Psi_{k,\beta}(\underline{v};\underline{l})$ is a multiple integral defined within the proof, see~\eqref{pre_incomplete_gamma}. 
  \end{lemmum}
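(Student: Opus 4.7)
The plan is to substitute $x=\log(t/2\pi)$ into the asymptotic expression provided by Lemma~\ref{integral_decomposition} and then carry out the integrations over $\underline{h}$ and $t$ in turn. The main task is extracting the correct power $k^2\beta^2-k+1$ of $\log(T/2\pi)$ from the combination of the explicit prefactor $(x/2)^{|\cS_{k,\beta;\underline{l}}|}$, the $\zeta$-factors in the integrand, and a suitable rescaling of the $\underline{h}$ variables.

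For fixed $\underline{l}$, I would first unpack the oscillatory phase. Using the structure of $\mu_n$ in \eqref{mu_vector}, a direct count gives
\[
\tfrac{1}{2}\sum_{j=1}^{k\beta}(\mu_j-\mu_{k\beta+j}) = \sum_{r=1}^{k-1}(l_r-\beta)(h_r-h_k),
\]
so the phase depends only on the differences $h_r-h_k$. This motivates the change of variables $h_r = h_k + 2w_r/x$ for $r=1,\dots,k-1$, with Jacobian $(2/x)^{k-1}$. Under this substitution, $\mu_m-\mu_n = 2(w_{r(m)}-w_{r(n)})/x$ for $(m,n)\in\cT_{k,\beta;\underline{l}}$ (where $r(\cdot)$ records the $h$-index each $\mu$ equals), and the Laurent expansion $\zeta(1+\epsilon)=1/\epsilon+O(1)$ converts each such $\zeta$-factor into $x/(2(v_m-v_n+i(w_{r(m)}-w_{r(n)})))+O(1)$, producing an overall factor $(x/2)^{|\cT_{k,\beta;\underline{l}}|}$.

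The total power of $x$ in the resulting integrand is thus $|\cS_{k,\beta;\underline{l}}|+|\cT_{k,\beta;\underline{l}}|-(k-1)=k^2\beta^2-k+1$, since $|\cS|+|\cT|=(k\beta)^2$. The $h_k$-integration trivialises through the shift invariance of $A_{k\beta}$: substituting $\theta\mapsto\theta+h_k(\log p)/(2\pi)$ in each local factor of \eqref{def_A} shows $A_{k\beta}$ is invariant under a common imaginary shift of its arguments, so
\[
A_{k\beta}(i\mu_1,\dots,i\mu_{2k\beta}) = A_{k\beta}(i(\mu_1-h_k),\dots,i(\mu_{2k\beta}-h_k)) \longrightarrow \alpha_{k,\beta}
\]
as $x\to\infty$. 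The remaining $w$-integrals, taken over the expanding ranges $w_r\in[-xh_k/2,\,x(1-h_k)/2]$, extend in the limit to $\mathbb{R}^{k-1}$, and their value defines $\Psi_{k,\beta}(\underline{v};\underline{l})$ in \eqref{eq:gamma_coeff}.

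The final $t$-integration is standard: $\tfrac{1}{T}\int_0^T(\log(t/2\pi))^{k^2\beta^2-k+1}\,dt = (\log(T/2\pi))^{k^2\beta^2-k+1}(1+O(\log^{-1}T))$. Summing the resulting contributions over $\underline{l}$ assembles the full coefficient $\alpha_{k,\beta}\gamma_{k,\beta}$ as in the statement. The main obstacle is rigorously justifying convergence of the limiting $w$-integrals in those cases where $l_r\neq\beta$ for some $r$: absolute convergence fails and one must exploit the oscillatory exponential $e^{2i\sum(l_r-\beta)w_r}$ via contour-shift arguments (analogous to the corresponding step of~\cite{baikea19}), together with uniform error control when extending the $w$-ranges to $\mathbb{R}$.
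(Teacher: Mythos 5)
Your proposal follows essentially the same route as the paper's proof: substitute $x=\log(t/2\pi)$ into lemma~\ref{integral_decomposition}, write the phase as $e^{ix\sum_{j=1}^{k-1}(l_j-\beta)(h_j-h_k)}$, rescale the $h$-differences by $2/x$, take the leading Laurent term of each $\zeta$-factor to produce $(x/2)^{|\cT_{k,\beta;\underline{l}}|}$, let $A_{k\beta}\to A_{k\beta}(0,\dots,0)$, count $|\cS|+|\cT|-(k-1)=k^2\beta^2-k+1$, and finish with the elementary $t$-integration. The only deviations are minor and presentational: you obtain the difference-dependence of $A_{k\beta}$ by shift-invariance of the Euler product \eqref{def_A} rather than via \eqref{alt_a}, and your limiting $\delta$-integral is taken over $\mathbb{R}^{k-1}$ whereas the paper's $\Psi_{k,\beta}$ in \eqref{pre_incomplete_gamma} is written over $[0,\infty)^{k-1}$; the paper's own passage to that domain (and its treatment of the convergence and uniformity issues you rightly flag) is at the same heuristic level, so this does not constitute a substantive difference in approach.
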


  Set $x\coloneqq\log(t/2\pi)$. By lemma~\ref{integral_decomposition}, we have that
    \begin{align}
    \mom_{P_{k,\beta}}(T)&= \frac{1}{T}\int_{0}^{1}\cdots\int_{0}^{1}\int_0^TP_{k,\beta}(\log\tfrac{t}{2\pi},\underline{h})dtdh_1\cdots dh_k\nonumber\\
    &\sim\sum_{l_1,\dots,l_{k-1}=0}^{2\beta}\frac{c_{\underline{l}}(k,\beta)}{T(k\beta)!^2(2\pi i)^{2k\beta}}\nonumber\\
    &\qquad\times \int_{0}^{1}\cdots\int_{0}^{1}\int_0^T A_{k\beta}(i\mu_1,\dots,i\mu_{2k\beta}) e^{\frac{ix}{2}\sum_{j=1}^{k\beta}\mu_j-\mu_{k\beta+j}}\left(\tfrac{x}{2}\right)^{|\cS_{k,\beta;\underline{l}}|}\nonumber\\
    &\qquad\times\int_{\Gamma_0}\cdots\int_{\Gamma_0}f(\underline{v};\underline{l})\prod_{(m,n)\in\cT_{k,\beta;\underline{l}}}\zeta\big(1+\tfrac{2(v_m-v_n)}{x}+i(\mu_m-\mu_n)\big)\prod_{m=1}^{2k\beta}dv_mdt \prod_{n=1}^kdh_n.\label{mom_lemma1}
    \end{align}
  
  We begin by firstly rewriting the product of zeta functions in the innermost integrand of \eqref{mom_lemma1} using the definition of $\mu_1,\dots, \mu_{2k\beta}$, see \eqref{mu_vector}, 
  \begin{align}
    \prod_{\substack{ (m,n)\in\cT_{k,\beta;\underline{l}}}}&\zeta\left(1+\tfrac{2(v_m-v_n)}{x}+i(\mu_m-\mu_n)\right)\nonumber\\
    &=\prod_{1\leq\sigma<\tau\leq k}\prod_{(m,n)\in\cV^+_{\sigma,\tau}}\zeta\left(1+\tfrac{2(v_m-v_n)}{x}+i(h_\sigma-h_\tau)\right)\prod_{(m,n)\in\cV^-_{\sigma,\tau}}\zeta\left(1+\tfrac{2(v_m-v_n)}{x}-i(h_\sigma-h_\tau)\right).\label{zeta_expanded}
  \end{align}
  where the sets $\cV^+_{\sigma,\tau}$, $\cV^-_{\sigma,\tau}$ partition $\cT_{k,\beta;\underline{l}}$,
  \begin{align}
    \cV^+_{\sigma,\tau}&\coloneqq\{(m,n)\in \cT_{k,\beta;\underline{l}}:\mu_m-\mu_n=h_\sigma-h_\tau\},\label{V_subset1}\\
    \cV^-_{\sigma,\tau}&\coloneqq\{(m,n)\in \cT_{k,\beta;\underline{l}}:\mu_m-\mu_n=h_\tau-h_\sigma\}.\label{V_subset2}
  \end{align}

  Combining \eqref{mom_lemma1} and \eqref{zeta_expanded} we find
  \begin{align}
    \mom_{P_{k,\beta}}(T)&\sim\sum_{l_1,\dots,l_{k-1}=0}^{2\beta}\frac{c_{\underline{l}}(k,\beta)}{T(k\beta)!^2(2\pi i)^{2k\beta}}\nonumber\\
    &\qquad\times \int_{0}^{1}\cdots\int_{0}^{1}\int_0^T A_{k\beta}(i\mu_1,\dots,i\mu_{2k\beta}) e^{\frac{ix}{2}\sum_{j=1}^{k\beta}\mu_j-\mu_{k\beta+j}}\left(\tfrac{x}{2}\right)^{|\cS_{k,\beta;\underline{l}}|}\nonumber\\
    &\qquad\quad\times\int_{\Gamma_0}\cdots\int_{\Gamma_0}f(\underline{v};\underline{l})\prod_{1\leq\sigma<\tau\leq k}\prod_{(m,n)\in\cV^+_{\sigma,\tau}}\zeta\left(1+\tfrac{2(v_m-v_n)}{x}+i(h_\sigma-h_\tau)\right)\nonumber\\
    &\qquad\qquad\qquad\times \prod_{(m,n)\in\cV^-_{\sigma,\tau}}\zeta\left(1+\tfrac{2(v_m-v_n)}{x}-i(h_\sigma-h_\tau)\right)\prod_{m=1}^{2k\beta}dv_mdt\prod_{n=1}^kdh_n.
    \end{align}
  Next we switch the order of integration
  \begin{align}
    \mom_{P_{k,\beta}}(T)&\sim\sum_{l_1,\dots,l_{k-1}=0}^{2\beta}\frac{c_{\underline{l}}(k,\beta)}{T(k\beta)!^2(2\pi i)^{2k\beta}}\int_0^T\left(\tfrac{x}{2}\right)^{|\cS_{k,\beta;\underline{l}}|}\int_{\Gamma_0}\cdots\int_{\Gamma_0}f(\underline{v};\underline{l})\nonumber\\
    &\qquad\times\int_{0}^{1}\cdots\int_{0}^{1} A_{k\beta}(i\mu_1,\dots,i\mu_{2k\beta})\prod_{1\leq\sigma<\tau\leq k}\prod_{(m,n)\in\cV^+_{\sigma,\tau}}\zeta\left(1+\tfrac{2(v_m-v_n)}{x}+i(h_\sigma-h_\tau)\right)\nonumber\\
    &\qquad\qquad\times \prod_{(m,n)\in\cV^-_{\sigma,\tau}}\zeta\left(1+\tfrac{2(v_m-v_n)}{x}-i(h_\sigma-h_\tau)\right)e^{\frac{ix}{2}\sum_{j=1}^{k\beta}\mu_j-\mu_{k\beta+j}}\prod_{n=1}^kdh_n\prod_{m=1}^{2k\beta}dv_mdt,\label{switched_int}
  \end{align}
  and focus on the inner integrals over $h_1,\dots, h_k$. 

  We use the structure of $(\mu_1,\dots,\mu_{2k\beta})$ in order to write the exponential term in the innermost integrand of \eqref{switched_int} explicitly in terms of $h_1,\dots,h_{2k\beta}$,
  \begin{equation}
    \exp\left(\frac{ix}{2}\sum_{j=1}^{k\beta}\mu_j-\mu_{k\beta+j}\right)=\exp\left(ix\sum_{j=1}^{k-1}(l_j-\beta)(h_j-h_k)\right).
  \end{equation}

  Then, we use the Laurent expansion for the zeta function around its pole, 
  \begin{equation}
    \zeta(1+s)=\frac{1}{s}+\sum_{j=0}^{\infty}c_js^j,
  \end{equation}
  where $c_j=(-1)^j\mathfrak{s}_j/j!$ and $\mathfrak{s}_j$ are the Stieltjes constants (so $c_0$ is the Euler-Mascheroni constant). 

  In this calculation, we will be taking $s\sim 1/x$, so the main contribution to \eqref{switched_int} comes from just taking the first term in the Laurent series for each zeta function in the integrand. Hence the integral over the $h_1,\dots,h_k$ becomes

  \begin{align}
    \int_{0}^{1}&\cdots\int_{0}^{1} A_{k\beta}(i\mu_1,\dots,i\mu_{2k\beta})\prod_{1\leq\sigma<\tau\leq k}\prod_{(m,n)\in\cV^+_{\sigma,\tau}}\zeta\left(1+\tfrac{2(v_m-v_n)}{x}+i(h_\sigma-h_\tau)\right)\nonumber\\
    &\times \prod_{(m,n)\in\cV^-_{\sigma,\tau}}\zeta\left(1+\tfrac{2(v_m-v_n)}{x}-i(h_\sigma-h_\tau)\right)e^{\frac{ix}{2}\sum_{j=1}^{k\beta}\mu_j-\mu_{k\beta+j}}dh_1\cdots dh_k\nonumber\\
    &\sim\int_{0}^{1}\cdots\int_{0}^{1} \frac{A_{k\beta}(i\mu_1,\dots,i\mu_{2k\beta})e^{ix\sum_{j=1}^{k-1}(l_j-\beta)(h_j-h_k)}dh_1\cdots dh_k}{\prod_{1\leq\sigma<\tau\leq k}\prod_{(m,n)\in\cV^+_{\sigma,\tau}}\left(\tfrac{2(v_m-v_n)}{x}+i(h_\sigma-h_\tau)\right)\prod_{(m,n)\in\cV^-_{\sigma,\tau}}\left(\tfrac{2(v_m-v_n)}{x}-i(h_\sigma-h_\tau)\right)}.\label{h_int_zeta_exp}
  \end{align}
  Observe that each term in the integrand of \eqref{h_int_zeta_exp} is a function of the differences $\delta_j\coloneqq h_j-h_k$, $j=1,\dots,k$.  For the exponential term this is immediate.  For $A_{k\beta}(\cdot)$ we use \eqref{alt_a}, writing $A_{k\beta;p}(\cdot)$ for each local factor, 
  \begin{align}
    A_{k\beta;p}(i\mu_1,\dots,i\mu_{2k\beta})=\sum_{m=1}^{k\beta}\prod_{n\neq m}\frac{\prod_{j=1}^{k\beta}\left(1-\frac{1}{p^{1+i(\mu_j-h_k)}p^{-i(\mu_{k\beta+n}-h_k)}}\right)}{1-p^{i(\mu_{k\beta+n}-h_k)}p^{-i(\mu_{k\beta+m}-h_k)}}.
  \end{align}
  Similarly for the terms in the denominator, we write $h_\sigma-h_\tau=(h_\sigma-h_k)-(h_\tau-h_k)$.  We now perform a change of variables $2\delta_j=x(h_j-h_k)$ and use the fact that $A_{k\beta}$ is analytic in a neighbourhood of $(0,\dots,0)$.  Hence
  \begin{align}
    \int_{0}^{1}&\cdots\int_{0}^{1}\frac{A_{k\beta}(i\mu_1,\dots,i\mu_{2k\beta})e^{ix\sum_{j=1}^{k-1}(l_j-\beta)(h_j-h_k)}dh_1\cdots dh_k}{\prod_{1\leq\sigma<\tau\leq k}\prod_{(m,n)\in\cV^+_{\sigma,\tau}}\left(\tfrac{2(v_m-v_n)}{x}+i(h_\sigma-h_\tau)\right)\prod_{(m,n)\in\cV^-_{\sigma,\tau}}\left(\tfrac{2(v_m-v_n)}{x}-i(h_\sigma-h_\tau)\right)}\nonumber\\
    &\sim \int_{0}^{\frac{x}{2}}\cdots\int_{0}^{\frac{x}{2}} \frac{\left(\frac{x}{2}\right)^{|\cT_{k,\beta;\underline{l}}|-{k-1}}A_{k\beta}\left(0,\dots,0\right)e^{2i\sum_{j=1}^{k-1}(l_j-\beta)\delta_j}d\delta_1\cdots d\delta_{k-1}}{\prod_{1\leq\sigma<\tau\leq k}\prod_{(m,n)\in\cV^+_{\sigma,\tau}}\left(v_m-v_n+i(\delta_\sigma-\delta_\tau)\right)\prod_{(m,n)\in\cV^-_{\sigma,\tau}}\left(v_m-v_n-i(\delta_\sigma-\delta_\tau)\right)}\\
    &\sim\left(\frac{x}{2}\right)^{|\cT_{k,\beta;\underline{l}}|-{k-1}}A_{k\beta}\left(0,\dots,0\right)\Psi_{k,\beta}(\underline{v};\underline{l}),
  \end{align}
  where
  \begin{align}
    \Psi_{k,\beta}(\underline{v};\underline{l})\coloneqq\int_{0}^{\infty}\cdots&\int_{0}^{\infty} \frac{e^{2i\sum_{j=1}^{k-1}(l_j-\beta)\delta_j}}{\prod_{1\leq\sigma<\tau\leq k}\prod_{(m,n)\in\cV^+_{\sigma,\tau}}\left(v_m-v_n+i(\delta_\sigma-\delta_\tau)\right)}\nonumber\\
    &\qquad\qquad\times\frac{d\delta_1\cdots d\delta_{k-1}}{\prod_{1\leq\sigma<\tau\leq k}\prod_{(m,n)\in\cV^-_{\sigma,\tau}}\left(v_m-v_n-i(\delta_\sigma-\delta_\tau)\right)}. \label{pre_incomplete_gamma}
  \end{align}

  Then $\Psi_{k,\beta}(\underline{v};\underline{l})$ defined by \eqref{pre_incomplete_gamma} no longer depends on $x$.  We will further analyse its contribution to $\mom_{P_{k,\beta}}(T)$ in the proof of lemma~\ref{positivity_coefficient} below.

  Incorporating \eqref{pre_incomplete_gamma} in to \eqref{switched_int}, and recalling that $|\cS_{k,\beta;\underline{l}}|+|\cT_{k,\beta;\underline{l}}|=k^2\beta^2$ and $x=\log(t/2\pi)$, we find that 
  \begin{align}
    \mom_{P_{k,\beta}}(T)&\sim\sum_{l_1,\dots,l_{k-1}=0}^{2\beta}\frac{c_{\underline{l}}(k,\beta)A_{k\beta}(0,\dots,0)}{T(k\beta)!^2(2\pi i)^{2k\beta}}\int_0^T\left(\frac{x}{2}\right)^{k^2\beta^2-k+1}\int_{\Gamma_0}\cdots\int_{\Gamma_0}f(\underline{v};\underline{l})\Psi_{k,\beta}(\underline{v};\underline{l})\prod_{m=1}^{2k\beta}dv_m\; dt\\
    &=\alpha_{k,\beta}\gamma_{k,\beta}\left(\log\tfrac{T}{2\pi}\right)^{k^2\beta^2-k+1}\left(1+O\left(\log^{-1}\tfrac{T}{2\pi}\right)\right),
  \end{align}
  where  $\alpha_{k,\beta}\coloneqq A_{k\beta}(0,\dots,0)$ and 
  \begin{equation*}
    \gamma_{k,\beta}\coloneqq\sum_{l_1,\dots,l_{k-1}=0}^{2\beta}\frac{c_{\underline{l}}(k,\beta)}{(k\beta)!^2(2\pi i)^{2k\beta}}\int_{\Gamma_0}\cdots\int_{\Gamma_0}f(\underline{v};\underline{l})\Psi_{k,\beta}(\underline{v};\underline{l})\prod_{m=1}^{2k\beta}dv_m.
  \end{equation*}

\end{proof}


\begin{proof}[Proof of lemma~\ref{positivity_coefficient}]

  We recall the statement of lemma~\ref{positivity_coefficient}. 

  \begin{lemmum}
    The coefficient $\alpha_{k,\beta}\gamma_{k,\beta}$ appearing in the statement of lemma~\ref{correct_size} is non-zero.
  \end{lemmum}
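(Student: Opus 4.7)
The plan is to prove non-vanishing of $\alpha_{k,\beta}$ and of $\gamma_{k,\beta}$ separately.

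For the arithmetic factor $\alpha_{k,\beta}=A_{k\beta}(0,\ldots,0)$, I would specialize the Euler product \eqref{def_A} at $\underline{z}=0$. The double product collapses to $(1-1/p)^{k^2\beta^2}$, and the integral becomes $\int_0^1|1-e(\theta)/\sqrt{p}|^{-2k\beta}d\theta$, so
\[A_{k\beta;p}(0,\ldots,0)=\left(1-\tfrac{1}{p}\right)^{k^2\beta^2}\int_0^1\left|1-\tfrac{e(\theta)}{\sqrt{p}}\right|^{-2k\beta}d\theta>0\]
for every prime $p$ (the integrand has no singularity since $|e(\theta)/\sqrt{p}|<1$ for $p\geq 2$). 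Convergence of the Euler product follows from the alternative form \eqref{alt_a}, which displays each local factor as a polynomial in $p^{-1}$; a Taylor expansion yields $A_{k\beta;p}(0,\ldots,0)=1+O(p^{-2})$, ensuring absolute convergence. Hence $\alpha_{k,\beta}>0$.

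For $\gamma_{k,\beta}$, I would compare the defining multiple integral \eqref{eq:gamma_coeff} with the analogous coefficient $c_{k,\beta}$ from \eqref{eq:baikea}. In \cite{baikea19}, the parallel proof for $\mom_{U(N)}(k,\beta)$ starts from the characteristic-polynomial contour integral of \cite{cfkrs1}, in which the singular factors are $(1-e^{z_n-z_m})^{-1}$ rather than $\zeta(1+z_m-z_n)$. Both families of singularities are simple poles at $z_m=z_n$ with residue $\pm 1$, so the contour manipulations of lemma~\ref{integral_decomposition}, the variable change to $\underline v$, and the $h$-integration of lemma~\ref{correct_size} all go through with $(1-e^{-s})^{-1}$ in place of $\zeta(1+s)$. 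Since the arithmetic factor $A_{k\beta}(0,\ldots,0)$ is absent in the random matrix setting, the residual multiple integral in \eqref{eq:gamma_coeff}, together with $\Psi_{k,\beta}$, is precisely the object that produces $c_{k,\beta}$. The result \eqref{eq:baikea} of Bailey-Keating together with the volume-of-Gelfand-Tsetlin-region interpretation established in \cite{baikea19} show $c_{k,\beta}>0$, whence $\gamma_{k,\beta}\neq 0$.

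The main obstacle is verifying this parallel rigorously; in particular, one must confirm that the inner Fourier-type integral $\Psi_{k,\beta}(\underline v;\underline l)$ of \eqref{pre_incomplete_gamma} coincides with the analogous object that arises from applying the Laurent expansion of $(1-e^{-s})^{-1}$ to the random matrix multiple contour integral and performing the corresponding shift integrations. This is a careful bookkeeping exercise tracking the change of variables $2\delta_j=x(h_j-h_k)$ and the multiplicative constants generated along the way, but presents no conceptual difficulty. A more self-contained alternative would be to evaluate $\Psi_{k,\beta}$ directly: each denominator $(v_m-v_n\pm i(\delta_\sigma-\delta_\tau))^{-1}$ is the Fourier transform of a truncated exponential, so $\Psi_{k,\beta}$ can be expressed in terms of incomplete gamma functions, exhibiting $\gamma_{k,\beta}$ as a positive volume-type integral without invoking the random matrix comparison.
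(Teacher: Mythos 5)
Your proposal follows essentially the same route as the paper: non-vanishing of $\alpha_{k,\beta}$ via positivity of the arithmetic Euler product (the paper simply cites the explicit evaluation $A_{k\beta}(0,\dots,0)=\prod_p(1-1/p)^{(k\beta-1)^2}\sum_{m}\binom{k\beta-1}{m}^2p^{-m}$ from \cite{cfkrs2}, while you compute the local factors directly from \eqref{def_A}), and non-vanishing of $\gamma_{k,\beta}$ by identifying $\Psi_{k,\beta}$ with the analogous object in the random-matrix evaluation of $c_{k,\beta}$ from \cite{baikea19}, exploiting that $\zeta(1+s)$ and $(1-e^{-s})^{-1}$ have the same simple-pole structure. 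The bookkeeping you flag as the remaining obstacle is exactly what the paper carries out, showing $\Omega_{k,\beta}(\underline{v};\underline{l})\sim\kappa\,\Psi_{k,\beta}(\underline{v};\underline{l})$ with $\kappa>0$ (including the sign check $(-1)^{|\cT_{k,\beta;\underline{l}}|}=1$), so your plan is the paper's proof in outline.
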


  From lemma~\ref{correct_size}, we have that $\alpha_{k,\beta}=A_{k\beta}(0,\dots,0)$. The value of $A_n(0,\dots,0)$ was calculated by Conrey et al.~\cite{cfkrs2}. Hence, using their formula, we have

  \begin{equation}\label{arithmetic_term}
    A_{k\beta}(0,\dots,0)=\prod_p\left(1-\frac{1}{p}\right)^{(k\beta-1)^2}\sum_{m=0}^{k\beta-1}\binom{k\beta-1}{m}^2p^{-m}.
  \end{equation}

  Thus, all that remains to justify is that $\gamma_{k,\beta}\neq 0$.  To do this we appeal to the residue theorem\footnote{The initial part of the proof borrows calculations found in the proof of lemma 3.7 in~\cite{baikea19}.}.  Fix a choice of $l_1,\dots,l_{k-1}$ and notice that 
  \[\frac{c_{\underline{l}}(k,\beta)}{(k\beta)!^2}>0.\]

  To conclude, we compare to the random matrix case.  In previous work~\cite{baikea19}, recall (cf. \eqref{eq:baikea}) that the moments of the moments of unitary characteristic polynomials
  \begin{equation}
    \mom_{U(N)}(k,\beta)\coloneqq\int_{U(N)}\left(\frac{1}{2\pi}\int_0^{2\pi}|P_N(A,\theta)|^{2\beta}d\theta\right)^kdA
  \end{equation}
  were shown to satisfy
  \begin{equation}
    \mom_{U(N)}(k,\beta)=c_{k,\beta}N^{k^2\beta^2-k+1}(1+O(\tfrac{1}{N})),
  \end{equation}
  and $c_{k,\beta}$ is non-zero. The exact form\footnote{In~\cite{baikea19}, the multiple contour integral that they treat differs only from the one we consider here by a simple transformation.  For simplicity, the results presented henceforth have been translated so to be consistent with the notation within this article.} of $c_{k,\beta}$ is found by evaluating
  \begin{align*}
    c_{k,\beta}=\sum_{l_1,\dots,l_{k-1}=0}^{2\beta}\frac{c_{\underline{l}}(k,\beta)}{((k\beta)!)^2(2\pi i)^{2k\beta}}\int_{\Gamma_0}\cdots\int_{\Gamma_0}f(\underline{v};\underline{l})\Omega_{k,\beta}(\underline{v};\underline{l})\prod_{m=1}^{2k\beta}dv_m, 
  \end{align*}
  where $c_{\underline{l}}(k,\beta)$ and $f(\underline{v};\underline{l})$ are the same functions used throughout the present work (see~\eqref{binom_prod} and~\eqref{none_h_terms}),
  \begin{align}
    \Omega_{k,\beta}(\underline{v};\underline{l})=\lim_{N\rightarrow\infty}\frac{1}{(2\pi)^kN^{\cT_{k,\beta;\underline{l}}-k+1}}\int_0^{2\pi}\cdots\int_0^{2\pi}&\frac{e^{iN\sum_{j=1}^{k-1}(l_j-\beta)(\theta_j-\theta_k)}}{\prod_{(m,n)\in \cT_{k,\beta;\underline{l}}}(1-e^{\frac{v_n-v_m}{N}}e^{i(\mu_n-\mu_m)})}d\theta_1\cdots d\theta_k\\
    =\lim_{N\rightarrow\infty}\frac{1}{(2\pi)^kN^{\cT_{k,\beta;\underline{l}}-k+1}}\int_0^{2\pi}\cdots\int_0^{2\pi}&\frac{e^{iN\sum_{j=1}^{k-1}(l_j-\beta)(\theta_j-\theta_k)}}{\prod_{1\leq \sigma<\tau\leq k}\prod_{(m,n)\in \cV^+_{\sigma,\tau}}\left(1-e^{\frac{v_n-v_m}{N}}e^{i(\theta_\tau-\theta_\sigma)}\right)}\nonumber\\
    &\times \frac{d\theta_1\cdots d\theta_k}{\prod_{(m,n)\in \cV^-_{\sigma,\tau}}\left(1-e^{\frac{v_m-v_n}{N}}e^{i(\theta_\sigma-\theta_\tau)}\right)}.
  \end{align}
  The sets $\cT_{k,\beta;\underline{l}}$, $\cV^+_{\sigma,\tau}$, and $\cV^-_{\sigma,\tau}$ are as defined in \eqref{T_set}, \eqref{V_subset1}, and \eqref{V_subset2} respectively and $\mu$ matches the definition given by \eqref{mu_vector}. 

  In~\cite{baikea19}, the denominator in the integrand of $\Omega_{k,\beta}$ was expanded as a geometric series, and then the integral was explicitly calculated.  However, one could proceed in a similar way to the proof of lemma~\ref{correct_size}, i.e. noticing that the integrand is simply a function of the differences $\delta_j\coloneqq \theta_j-\theta_k$ and then use the Laurent expansion of the denominator to pull out the power of $N$. Proceeding in this way we find
  \begin{alignat}{2}
    \Omega_{k,\beta}(\underline{v};\underline{l}) &\sim \frac{1}{(2\pi)^k}\int_0^{\infty}\cdots\int_0^{\infty} &&\frac{e^{i\sum_{j=1}^{k-1}(l_j-\beta)\delta_j}}{\prod_{1\leq \sigma<\tau\leq k}\prod_{(m,n)\in \cV^+_{\sigma,\tau}}\left({v_n-v_m+i(\delta_\tau-\delta_\sigma)}\right)}\nonumber\\
    &&&\times \frac{d\delta_1\cdots d\delta_{k-1}}{\prod_{(m,n)\in \cV^-_{\sigma,\tau}}\left(v_n-v_m-i(\delta_\tau-\delta_\sigma)\right)}\\
    &\sim \frac{(-1)^{|\cT_{k,\beta;\underline{l}}|}}{(2\pi)^k}\int_0^{\infty}\cdots\int_0^{\infty}&&\frac{e^{i\sum_{j=1}^{k-1}(l_j-\beta)\delta_j}}{\prod_{1\leq \sigma<\tau\leq k}\prod_{(m,n)\in \cV^+_{\sigma,\tau}}\left({v_m-v_n+i(\delta_\sigma-\delta_\tau)}\right)}\nonumber\\
    &&&\times \frac{d\delta_1\cdots d\delta_{k-1}}{\prod_{(m,n)\in \cV^-_{\sigma,\tau}}\left(v_m-v_n-i(\delta_\sigma-\delta_\tau)\right)}.
  \end{alignat}

  Thus, since $|\cS_{k,\beta;\underline{l}}|+|\cT_{k,\beta;\underline{l}}|=k^2\beta^2$, and $|\cS_{k,\beta;\underline{l}}|=\sum_{j=1}^kl_j(2\beta-l_j)$, we have 
  \[(-1)^{|\cT_{k,\beta;\underline{l}}|}=(-1)^{k\beta(k\beta-1)}=1.\] 
  So $\Omega_{k,\beta}(\underline{v};\underline{l})\sim \kappa \Psi_{k,\beta}(\underline{v};\underline{l})$ for some positive constant $\kappa$. Hence $\gamma_{k,\beta}\neq 0$ by comparison with $c_{k,\beta}$.
\end{proof}

\section{Acknowledgements}
We would like to thank Valeriya Kovaleva for helpful comments and discussions. We are grateful to the anonymous referee for their careful reading of the paper and suggestions. ECB would like to thank the Heilbronn Institute for Mathematical Research for support.   JPK is pleased to acknowledge support from ERC Advanced Grant 740900 (LogCorRM).

\end{document}